\newcommand{\bd}{\partial}
\let\captemp\cap
\renewcommand{\cap}{\,\captemp\,}
\newcommand{\compl}[1]{#1^c} %set complement
\renewcommand{\epsilon}{\varepsilon}
\let\foralltemp\forall
\renewcommand{\forall}{\foralltemp\, }
\newcommand{\grass}{G}
\newcommand{\haumeas}[2][]{\mathcal{H}_{#1}^{#2}} %Hausdorff measure (optional: with step)
\newcommand{\indicator}{\mathds{1}} %indicator function
\DeclareDocumentCommand{\integral}{ O{} O{} m O{x}}{\int_{#1}^{#2} #3\, \mathrm{d}#4} %needs \usepackage{xparse}
\DeclareDocumentCommand{\lebnorm}{ m m O{}}{\|#1\|_{L^{#2}%
	\ifthenelse{\equal{#3}{}}{}{(#3)}}}
\newcommand{\Natural}{\mathbb{N}}
\newcommand{\norm}[2][]{\| #2 \|_{#1}}
\renewcommand{\phi}{\varphi}
\newcommand{\Real}{\mathbb{R}}
\newcommand{\set}[2]{\left\{ #1 : #2 \right\}}
\newcommand{\setdiff}{\backslash} %difference of sets
\newcommand{\simpleset}[1]{\left\{ #1 \right\}} %definition of set, no 2nd part for conditions
\DeclareDocumentCommand{\sobnorm}{ m m m O{}}{\|#1\|_{W^{#2, #3}%
	\ifthenelse{\equal{#4}{}}{}{(#4)}}}
\newcommand{\sphere}{\mathbb S^n}
\newcommand{\sphsurf}[1]{\omega_{#1}}
\DeclareDocumentCommand{\unitsphere}{ m O{} }{\mathbb S^{#1 - %
  \ifthenelse{\equal{#2}{}}{1}{#2}}}
\newcounter{object}[section]
\renewcommand{\theobject}{\arabic{section}.\arabic{object}}
\newenvironment{corollary}[1][]{\medskip%
			 \refstepcounter{object}% 
			 \textbf{Corollary \theobject%
			 \ifthenelse{\equal{#1}{}}{.}{\ (#1).}\ }\itshape }{\medskip}
\newenvironment{lemma}{\rmfamily \medskip%
			  \refstepcounter{object}%
			  \textbf{Lemma \theobject .}\ \itshape}{}
\newenvironment{proof}[1][]{\medskip%
  \textit{Proof\ifthenelse{\equal{#1}{}}{}{ of #1}.}\ }{\hfill $\blacksquare$}
\newenvironment{theorem}[1][]{\medskip%
			 \refstepcounter{object}% 
			 \textbf{Theorem \theobject%
			 \ifthenelse{\equal{#1}{}}{.}{\ (#1).}\ }\itshape }{\medskip}
\numberwithin{equation}{section}
\title{Fractional perimeters on the sphere}
\author{Andreas Kreuml, Olaf Mordhorst}
\date{}
\begin{document}

\maketitle

\begin{abstract}
This note treats several problems for the fractional perimeter or $s$-perimeter on the sphere.
The spherical fractional isoperimetric inequality is established.
It turns out that the equality cases are exactly the spherical caps. 
Furthermore, the convergence of fractional perimeters to the surface area as $s \nearrow 1$ is proven.
It is shown that their limit as $s \searrow -\infty$ can be expressed in terms of the volume.
\end{abstract}

\section{Introduction}

In the Euclidean setting the fractional \(s\)-perimeter, \(0<s<1\), of  a Borel set \(E\subseteq \mathbb{R}^n\) is defined by the double integral
\[
	P_s(E)= \integral[E]{\integral[\Real^n \setdiff E]{{\frac{1}{|x-y|^{n+s}}}}[y]},
\]
where $|\cdot|$ denotes the Euclidean norm in $\Real^n$.
Fractional perimeters can be seen as a special case of fractional Sobolev norms and their first systematic study was initiated by Caffarelli, Roquejoffre \& Savin in \cite{crs}.
It is natural to extend this notion to the setting of Riemannian manifolds. Let \((M,g)\) be a compact, connected Riemannian manifold of dimension \(n\), \(d\) the geodesic distance and \(\text{d}V_g\) the volume element induced by the Riemannian metric \(g\). In \cite{fracmf} the fractional \(s\)-perimeter of a Borel set $E \subseteq M$ is defined as
\begin{equation*}
	P_s(E):=\integral[E]{\integral[M \setdiff E]{\frac{1}{d(x,y)^{n+s}}}[V_g(y)]}[V_g(x)].
      \end{equation*}
Since there will be no confusion, we use the same notation $P_s$ for both the Euclidean and Riemannian setting. 

In the Euclidean setting, fractional perimeters are interesting objects in geometric measure theory since they interpolate between the volume and the surface area. Conceptually, fractional perimeters are a coarsened version of the perimeter functional $P$ due to de Giorgi \cite{degiorgi}. Another fundamental property is that they provide a family of isoperimetric-type inequalities. 

The purpose of this paper is to transfer these geometric principles to the study of fractional perimeters on the sphere. We give the precise statements later on. We like to point out that a good portion of the facts presented here rely on the special structure of the sphere. In contrast to the Euclidean space, the sphere lacks a vector space structure but on the other hand has the property that it "looks everywhere the same", for example in the sense of curvature or a symmetry group acting transitively on it. However, the interpolation between volume and surface area by fractional perimeters does not work completely analogously to the Euclidean case and we think that a transfer to more general manifolds is not possible. Yet, other examples of manifolds with good symmetry properties and similar geometric features would be of greater interest.

We like to mention that a quite similar theory of geometric nature exists for the \(L_p\)-affine surface area on convex bodies. The \(L_p\)-affine surface area was introduced in \cite{Lutwak1996} and interpolates between the classical affine surface area due to Blaschke and the volume of the polar of a convex body (see \cite{Werner2008} for an overview). They also fulfill isoperimetric-type inequalities (\cite{LutwakYangZhang2000}, \cite{LutwakYangZhang2002}, \cite{WernerYe2008}). Recently, the notion of affine surface area was extended to the case of convex bodies on the sphere (\cite{BesauWerner2016}).

For the scope of this paper we use the following more suitable notation for fractional perimeters on the sphere. The \(n\)-dimensional sphere is denoted by $\sphere\subseteq \mathbb{R}^{n+1}$, for $E \subseteq \sphere$ we put $\compl E := \sphere \setdiff E$ and the \(k\)-dimensional Hausdorff measure is denoted by \(\haumeas{k}\). The fractional \(s\)-perimeter for Borel sets \(E\) is then given by 
\begin{equation}
  \label{eq:fracpersph}
  P_s(E) = \integral[E]{
    \integral[\compl E]{
      \frac{1}{d(x,y)^{n+s}}
    }[\haumeas{n}(y)]
  }[\haumeas{n}(x)].
\end{equation}
Since the sphere is compact it is also possible to study \(s\)-perimeters for negative values of $s$, i.e. we consider \(s\in (-\infty, 1)\) in this paper. 
Note that for $s \leq -n$ there are no singularities in the integrand and it can easily be seen that the integrals also converge whenever $-n < s < 0$.

The first part deals with the isoperimetric inequality for spherical fractional perimeters. 
%The classical isoperimetric inequality and the fractional isoperimetric inequality in \(\mathbb{R}^n\) state that for every Euclidean ball $B \subseteq \Real^n$ and every Borel set \(E\subseteq \mathbb{R}^n\) with \(\mathcal{H}^n(E)=\mathcal{H}^n(B)\)
For every Euclidean ball $B \subseteq \Real^n$ and every Borel set $E \subseteq \Real^n$ with $\haumeas{n}(E) = \haumeas{n}(B)$ we have the classical isoperimetric inequality in $\Real^n$,
\begin{equation} \label{isoper}
	P(E)\geq P(B),
\end{equation}
and for every $s \in (0,1)$ the fractional isoperimetric inequality in $\Real^n$,
\begin{equation}
  \label{eq:fracisoper}
  P_s(E) \ge P_s(B),
\end{equation}
where in both inequalities equality holds only for balls of the same $n$-dimensional Hausdorff measure up to nullsets.
%where $\mathbb B^n$ is the closed $n$-dimensional Euclidean unit ball.
General references for the discussion and proof of \eqref{isoper} and \eqref{eq:fracisoper} are \cite{maggi} and \cite{frankseiringer} respectively.
 There are nowadays a lot of isoperimetric inequalities in different settings, for example for the Gauss measure (see \cite{borell}, \cite{sudakov}), on the sphere (first discovered by Paul L\'evy) and for anisotropic fractional perimeters (see \cite{kreuml-aniso}, \cite{ludwig-perimeter}). 
 %Using ideas of \cite{frankseiringer} we prove that for every spherical cap \(C\subseteq \sphere \) and every Borel set \(E\) with \(\mathcal{H}^{n}(E)=\mathcal{H}^{n}(C)\) we have
 We apply a general rearrangement inequality established by Beckner \cite{beckner} to spherical fractional perimeters and derive that for every spherical cap $C \subseteq \sphere$ and every Borel set $E$ with $\mathcal H^n(E) = \mathcal H^n(C)$ and every $s \in (-n,1)$ we have
 \begin{equation}
	 \label{sphericaliso}
P_s(E)\geq P_s(C)
\end{equation}
with equality only for spherical caps of same \(n\)-dimensional Hausdorff measure up to nullsets. 
For \(-\infty < s < -n\) we show a reverse isoperimetric-type inequality.

The second part deals with the convergence as \(s\) tends to \(1\) from below. In the Euclidean setting a result by D\'avila \cite{davila} based on the work of Bourgain, Br\'ezis \& Mironescu  (see \cite{bbm}, and also \cite{bbmlimiting}) states that if $E \subseteq \Real^n$ is a Borel set, then
\begin{align}
	\lim_{s \nearrow 1} (1-s)P_s(E)= \haumeas{n-1}(\mathbb B^{n-1}) P(E) \label{ConvergencePerimeter}
\end{align}
where $\mathbb B^{n-1}$ is the closed $(n-1)$-dimensional Euclidean unit ball.
If \(E\) has sufficiently regular boundary, \(P\) just coincides with the usual surface area.

An analogous result was already shown for more general Riemannian manifolds than the sphere in \cite{fracmf}. Hence, this result actually does not depend on the particular structure of the sphere. The proof technique is based on cutting the manifold into small pieces, approximating every piece by a Euclidean space and using ideas from \cite{bbm} and \cite{davila}.
Yet, we present a different proof based on ideas developed by Ludwig in \cite{ludwig-perimeter} and \cite{ludwignorm} for anisotropic fractional perimeters and norms. Ludwig uses tools from integral geometry, namely the Blaschke-Petkantschin formula. 
Here, we use a spherical version of the Blaschke-Petkantschin formula which follows from a kinematic formula by Arbeiter \& Z\"ahle \cite{MR1131953} and for which a more accesible proof was given recently by Hug \& Th\"ale \cite{hug-thaele}.
%Here, we use a spherical version of the Blaschke-Petkantschin formula
Thus, using properties of the sphere we present a more organized and concise approach.
However, we show the convergence result on the sphere only for polyconvex sets. In Ludwig's proof a result of Wieacker \cite[Theorem 1]{wieacker} is used which is only available in the Euclidean setting. Since the general result is already proven in \cite{fracmf}, we decided not to include a proof of a spherical version of Wieacker's result since our objective is of conceptual nature.    

As a small remark we like to point out that the classical isoperimetric inequality on the sphere follows from the fractional variant \eqref{sphericaliso} and the aforementioned convergence result.

The main result of this paper deals with the convergence of spherical fractional perimeters and seminorms as $s \searrow -\infty$.
We show that if $f \in L_p(\sphere)$ where $1 \le p < \infty$ and $\tilde d(x,y) := \frac{d(x,y)}{\pi}$ is the normalized geodesic distance between $x, y \in \sphere$, then
\begin{equation*}
  \lim_{s \searrow -\infty} (-s)^n \integral[\sphere]{
    \integral[\sphere]{
      \frac{|f(x)-f(y)|^p}{\tilde d(x,y)^{n+sp}}
    }[\haumeas{n}(y)]
  }[\haumeas{n}(x)]
  = c_{n,p} \integral[\sphere]{|f(x)-f(-x)|^p}[\haumeas{n}(x)],
\end{equation*}
where $c_{n,p}$ is a constant depending on $n$ and $p$ whose explicit value is given in Theorem \ref{th:convsto-inf}.
In particular, the normalized fractional perimeter, where the distance $d$ is replaced by its normalized version $\tilde d$, has a zero of order $n$ at $s=-\infty$, and
\begin{equation*}
  \lim_{s \searrow -\infty} (-s)^n \integral[E]{
    \integral[\compl E]{
      \frac {1} {\tilde d(x,y)^{n+s}}
    }[\haumeas{n}(y)]
  }[\haumeas{n}(x)] = c_{n,1} \haumeas{n}((-E) \cap (\compl E))
\end{equation*}
whenever $E \subseteq \sphere$ is a Borel set.
This result is in a similar spirit as a result by Maz'ya \& Shaposhnikova \cite{mazya} who showed that if $f$ lies in the fractional Sobolev space $W^{s_0,p}(\Real^n)$ for a certain $s_0 \in (0,1)$, then
\begin{equation}
  \lim_{s \searrow 0} s \integral[\Real^n]{
    \integral[\Real^n]{
      \frac {|f(x)-f(y)|^p}{|x-y|^{n+sp}}
    }[y]
  } = d_{n,p} \integral[\Real^n]{|f(x)|^p}\label{MSLimit}
\end{equation}
where the constant $d_{n,p}$ depends on $n$ and $p$.
We like to emphasize that the corresponding limit procedure of \eqref{MSLimit} on the sphere does not lead  to the same result since it is \(0\) for smooth functions (see Section \ref{LimitToZero}).

The paper is organized as follows. In the second section we provide some notions and definitions. In the third section we show the aforementioned isoperimetric inequality. In the fourth section we provide some lemmata of integral geometric nature and prove the convergence of spherical fractional perimeters to the surface area as $s \nearrow 1$.
In the fifth section we consider the limit of spherical fractional perimeters as $s \searrow -\infty$.

\section{Definitions and Notation}

For two vectors $x = (x_1,\dots,x_{n+1})$ and $y = (y_1,\dots,y_{n+1})$ in $\Real^{n+1}$ the Euclidean inner product is given by $x \cdot y = \sum_{j=1}^{n+1} x_j y_j$.
It induces the Euclidean norm $|x| = \sqrt{x \cdot x}$.
The Euclidean unit sphere $\sphere \subseteq \Real^{n+1}$ is then given by $\sphere = \set{x \in \Real^{n+1}}{|x| = 1}$.
It inherits the topology from $\Real^{n+1}$ and we write $\bd E$ for the boundary of the set $E \subseteq \sphere$ and $\compl E = \sphere \setdiff E$ for the complement of $E$ in the sphere.
For $x \in \Real^{n+1}$ and $r > 0$ we denote by $B_r^{n+1}(x) := \set{y \in \Real^{n+1}}{|x-y| < r}$ the open Euclidean ball around $x$ with radius $r$ and simply write $B_r^{n+1} := B_r^{n+1}(0)$ for balls centered at the origin.
The indicator function of a set $E$ is the function defined by $\indicator_E (x) = 1$ for $x \in E$ and $\indicator_E(x) = 0$ for $x \notin E$.
Furthermore we denote the $k$-dimensional Hausdorff measure in $\Real^{n+1}$ by $\haumeas k$, $k \in \simpleset{0,\dots,n+1}$, such that the restriction of $\haumeas{n}$ to $\sphere$ is equal to the spherical Lebesgue measure on the Borel sets of $\sphere$.
We write $\sphsurf{n+1}$ for the surface area of the $n$-dimensional unit sphere, i.e. $\omega_{n+1} = \haumeas{n}(\sphere)$.

We denote the Grassmannian of 2-dimensional subspaces in $\Real^{n+1}$ by $G(n+1,2)$ and equip it with the Haar measure $\mathrm{d}L$ such that $\integral[G(n+1,2)]{}[L] = 1$.

The geodesic distance $d(x,y)$ between two points $x, y \in \sphere$ is defined as
\begin{equation*}
	d(x,y) := \inf \ell(\gamma)\quad (\in [0,\pi]),
\end{equation*}
where the infimum ranges over all piecewise $C^1$-curves connecting $x$ and $y$ and $\ell(\gamma)$ denotes the length of the curve $\gamma$.
The infimum is attained for great circle arcs, which lie in the intersection of any 2-dimensional plane through the origin containing $x$ and $y$ with $\sphere$.
We further put
\begin{equation*}
	d(E,x) := \inf_{y \in E} d(y,x)
\end{equation*}
for $E \subseteq \sphere$.
A non-empty subset $K \subseteq \sphere$ is called a (spherically) convex body if the cone
\begin{equation*}
	\text{pos}(K) := \set{\lambda x}{\lambda \ge 0, x \in K}
\end{equation*}
generated by $K$ is a closed convex subset of $\Real^{n+1}$.
Note that for any pair of points $x,y \in K$ the shorter geodesic line segment connecting $x$ and $y$ lies entirely in $K$.
We further remark that for each $x \in \sphere$ such that $0 \le d(K,x) < \pi/2$ there is a unique point $p(K,x)$ in $K$ that is nearest to $x$.
We say that the set $E \subseteq \sphere$ is \emph{polyconvex} if it can be written as a finite union of convex bodies.

A convex body $P \subseteq \sphere$ is called a (spherical) polytope, if its cone $\text{pos}(P)$ is the intersection of finitely many halfspaces.
We call $F$ a $k$-face of $P$, $k \in \simpleset{0,\dots,n}$, if $F = \tilde{F} \cap \sphere$, where $\tilde F$ is a $(k+1)$-face of the polyhedral cone $\text{pos}(P)$.

If $K \subseteq \sphere$ is a convex body, its polar body $K^\circ$ is defined by
\begin{equation*}
	K^\circ := \set{x \in \sphere}{x \cdot y \le 0 \text{ for all } y \in K}.
\end{equation*}
The normal cone $N(K,x)$ of $K$ at $x \in \bd K$ is then defined by
\begin{equation*}
	N(K,x) := \set{y \in K^\circ}{x \cdot y = 0}.
\end{equation*}

For our integral-geometric treatment of the fractional perimeter we will use spherical curvature measures, which satisfy a local spherical Steiner formula.
For a convex body $K \subseteq \sphere$, a Borel set $A \subseteq \sphere$ and $0 < \epsilon < \pi/2$ we put
\begin{equation*}
	M_{\epsilon}(K,A) := \set{x \in \sphere}{d(K,x) \le \epsilon,\ p(K,x) \in A}.
\end{equation*}
Then the curvature measures $\phi_0(K,\cdot),\dots,\phi_{n-1}(K,\cdot)$ are the uniquely determined Borel measures on $\sphere$ such that for all Borel sets $A \subseteq \sphere$ and $0 < \epsilon < \pi/2$
\begin{equation*}
	\haumeas{n}(M_\epsilon(K,A)) = \sum_{j=0}^{n-1} g_{n,j}(\epsilon) \phi_j(K,A),
\end{equation*}
where
\begin{equation*}
	g_{n,j}(\epsilon) = \omega_{j+1} \omega_{n-j} \integral[0][\epsilon]{\cos^j t\, \sin^{n-j-1} t}[t],
\end{equation*}
see e.g. \cite[Theorem 6.5.1]{MR2455326} for generalized curvature measures.

\section{The Spherical Isoperimetric Inequality for $s$-Perimeters}
Let \(v\in \sphere\) and \(0\leq r<\infty\). We define the open spherical cap \(C(v,r)\) by
\[
C(v,r)=\{w\in\sphere: d(v,w)<r\}.
\]
Note that \(C(v,0)=\emptyset\), \(C(v,\pi)=\mathbb{S}^{n}\backslash \{-v\}\) and \(C(v,r)=\mathbb{S}^{n}\) if \(r>\pi\). 
%We show a spherical isoperimetric inequality for \(s\)-perimeters. 
We show a spherical isoperimetric inequality for \(s\)-perimeters if \(s>-n\) and a reverse isoperimetric-type inequality for \(s<-n\). 

\begin{theorem}\label{IsoperimetricInequality} Let \(E\subseteq \mathbb{S}^{n}\) be a Borel set and $C$ a spherical cap with $\haumeas{n}(E) = \haumeas{n}(C)$. Then 
\begin{align}
P_s(E)\geq P_s(C)\label{IsoperimetricInequality1}
\end{align}
for \(-n<s<1\)
and
\begin{align}
P_s(E)\leq P_s(C)\label{AntiIsoperimetricInequality1}
\end{align}
for \(-\infty < s < -n\).
Equality is attained if and only if \(E\) is itself a spherical cap up to a \(\mathcal{H}^{n}\)-nullset.
\end{theorem}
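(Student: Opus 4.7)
The plan is to reduce the theorem to Beckner's spherical rearrangement inequality. The starting point is the splitting
\[
  P_s(E) = \int_E \int_{\sphere} \frac{d\haumeas{n}(y)}{d(x,y)^{n+s}}\, d\haumeas{n}(x) - \int_E \int_E \frac{d\haumeas{n}(y)}{d(x,y)^{n+s}}\, d\haumeas{n}(x).
\]
The inner integral in the first term depends only on $s$ and $n$ by the rotational invariance of the sphere, so there is a constant $c_{n,s}$ (finite for every $s \in (-\infty,1)$, as already noted in the introduction) such that
\[
  P_s(E) = c_{n,s}\, \haumeas{n}(E) - I(E), \qquad I(E) := \iint_{E \times E} \frac{d\haumeas{n}(x)\, d\haumeas{n}(y)}{d(x,y)^{n+s}}.
\]
Since $\haumeas{n}(E) = \haumeas{n}(C)$ by hypothesis, \eqref{IsoperimetricInequality1} is equivalent to $I(E) \leq I(C)$ and \eqref{AntiIsoperimetricInequality1} is equivalent to $I(E) \geq I(C)$.

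Next I would invoke Beckner's spherical rearrangement inequality: for any non-negative Borel function $f$ on $\sphere$ and any strictly decreasing $K : (0,\pi] \to \Real$,
\[
  \iint f(x) f(y) K(d(x,y))\, d\haumeas{n}(x)\, d\haumeas{n}(y) \leq \iint f^*(x) f^*(y) K(d(x,y))\, d\haumeas{n}(x)\, d\haumeas{n}(y),
\]
with equality if and only if $f$ agrees up to a rotation with its symmetric decreasing rearrangement $f^*$ around a fixed pole. Applied to $f = \indicator_E$, one has $f^* = \indicator_C$ for the cap of equal measure, so equality forces $E$ itself to be a cap up to an $\haumeas{n}$-nullset. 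When $-n < s < 1$ the kernel $K(t) = t^{-(n+s)}$ is strictly decreasing on $(0,\pi]$, so Beckner immediately gives $I(E) \leq I(C)$, proving \eqref{IsoperimetricInequality1} together with its equality case.

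When $-\infty < s < -n$ the kernel $t^{-(n+s)} = t^{|n+s|}$ is strictly \emph{increasing} on $[0,\pi]$, so Beckner cannot be applied directly. The trick is to rewrite
\[
  \frac{1}{d(x,y)^{n+s}} = \pi^{|n+s|} - \bigl(\pi^{|n+s|} - d(x,y)^{|n+s|}\bigr),
\]
where $t \mapsto \pi^{|n+s|} - t^{|n+s|}$ is strictly decreasing on $[0,\pi]$. Applying Beckner to this auxiliary kernel, the constant contributions $\pi^{|n+s|}\haumeas{n}(E)^2$ and $\pi^{|n+s|}\haumeas{n}(C)^2$ coincide by the volume constraint and cancel, which gives $I(E) \geq I(C)$ and thus \eqref{AntiIsoperimetricInequality1}, again with the claimed equality case inherited from Beckner.

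The only genuinely delicate step is ensuring that Beckner's inequality—typically stated in the reference for kernels that are increasing functions of the inner product $x \cdot y$—applies in the precise form used here, and in particular that its equality characterization really forces $\indicator_E$ to be a rotate of $\indicator_C$ rather than merely equimeasurable with it. Everything else is the short algebraic manipulation described above.
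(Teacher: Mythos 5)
Your reduction $P_s(E) = c_{n,s}\,\haumeas{n}(E) - I(E)$ has a genuine gap on the range $0 \le s < 1$: there the constant
\begin{equation*}
  c_{n,s} = \int_{\sphere} \frac{\mathrm{d}\haumeas{n}(y)}{d(x,y)^{n+s}}
\end{equation*}
is $+\infty$, because the singularity $d(x,y)^{-(n+s)}$ with $n+s \ge n$ is not locally integrable on an $n$-dimensional manifold, and likewise $I(E) = +\infty$ for every set of positive measure. The remark in the introduction that you cite concerns the convergence of $P_s(E)$ itself (an integral over $E \times \compl E$, where the two factors meet only along $\bd E$) for $-n < s < 0$; it does not assert integrability of the kernel over all of $\sphere$, which fails for $s \ge 0$. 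So your splitting is an $\infty - \infty$ identity precisely in the classically most interesting part of the range, and the claimed equivalence of \eqref{IsoperimetricInequality1} with $I(E) \le I(C)$ breaks down there. (For $-n < s < 0$ and for $s < -n$ your argument is fine, and your treatment of $s<-n$ — writing the kernel as a constant minus a decreasing function and cancelling the constant using the volume constraint — is exactly the paper's.) One can patch the range $0 \le s < 1$ by truncating the kernel at level $M$, applying the Riesz-type inequality to the bounded decreasing kernel $\min(K,M)$, and letting $M \to \infty$ by monotone convergence, but then the equality characterization requires an additional argument (e.g.\ observing that $(K-M)_+$ is again decreasing, so strict inequality at one truncation level persists in the limit); as written, your proof does not supply this.

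The paper sidesteps the issue entirely by never splitting off the diagonal: it applies Beckner's Theorem \ref{th:becknerrearr} directly to $2P_s(E) = \iint |\indicator_E(x)-\indicator_E(y)|^p\, d(x,y)^{-(n+s)}$, choosing $\phi(t)=t^p$, $k(t)=(1+t)^{-(n+s)}$ (bounded and strictly decreasing for $s>-n$) and $\rho(t)=(t/(1+t))^{(n+s)/p}$ (increasing), so that $\phi(\,\cdot\,/\rho)\,k$ reassembles the singular kernel while all hypotheses of the rearrangement theorem are met; the equality case then comes straight from Beckner's characterization $f = \lambda f^*(\theta\,\cdot)$ applied to indicator functions. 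Your product-form Riesz inequality is derivable from that theorem for bounded kernels (take $\phi(t)=t^2$, $\rho \equiv 1$ and expand the square), so the two approaches coincide in substance where yours works; the missing ingredient is precisely the $k$--$\rho$ factorization that tames the non-integrable kernel.
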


Note that \(P_{-n}(E)\) does  not change for all \(E\) of same measure. We present this fact in the proof of the theorem. 

It is easy to see that the theorem can be reformulated as follows:
Let \(0<\alpha\leq \omega_{n+1}\). Then there is a constant  \(\gamma_{n,s,\alpha}\) such that for every Borel set \(E\subseteq \mathbb{S}^{n}\) with \(\alpha=\mathcal{H}^{n}(E)\) we have
\[
P_s(E)\geq \gamma_{n,s,\alpha}\mathcal{H}^{n}(E)=\gamma_{n,s,\alpha}\alpha
\]
if \(-n<s<1\) and
\[
P_s(E)\leq \gamma_{n,s,\alpha}\mathcal{H}^{n}(E)=\gamma_{n,s,\alpha}\alpha
\]
if \(-\infty < s < -n\). The constant \(\gamma_{n,s,\alpha}\) is given by \(\gamma_{n,s,\alpha}=\frac{P_s(C)}{\alpha}\), where $C \subseteq \sphere$ is a spherical cap with $\haumeas{n}(C) = \alpha$. Equality is attained if and only if \(E\) is a spherical cap up to a \(\mathcal{H}^{n}\)-nullset.  
It is easy to show that
\[
	\lim\limits_{\alpha\nearrow \omega_{n+1}}\gamma_{n,s,\alpha}=0
\]
for every \(-n <s<1\). Hence, one cannot expect to have a uniform constant \(\gamma_{n,s}\) in this case.

In order to prove the theorems we use rearrangement inequalities with respect to a fixed center of symmetry \(e\in\mathbb{S}^{n}\) on the sphere. We use the same notations as for rearrangements in the Euclidean setting as there will be no confusions. 
The function \(a:[0,\pi]\rightarrow [0,\sphsurf{n+1}]\), \(a(r)=\mathcal{H}^{n}(C(v,r))\) does not depend on the choice of $v \in \sphere$ and is strictly increasing and bijective. For a Borel set \(E\subseteq \mathbb{S}^{n}\) the spherical volume radius \(r_{\sigma}\) is defined by
\[
r_{\sigma}(E)=a^{-1}(\mathcal{H}^{n}(E)).
\]
and the spherical rearrangement of \(E\) by
\(E^*=C(e,r_{\sigma}(E))\). Let \(f:\mathbb{S}^{n}\rightarrow\mathbb{R}\) be a measurable function. The spherical rearrangement of \(f\) is denoted by \(f^*\) and is defined by the layer cake representation \(f^*:\mathbb{S}^{n}\rightarrow\mathbb{R}_{\geq 0}\), \(f^*(v)=\int_0^{\infty}\indicator_{\{|f|>t\}^*}(v)\mathrm{d}t\). The following rearrangement inequality on the sphere can be found in \cite[Theorem 3]{beckner}.

\begin{theorem}
  \label{th:becknerrearr}
  Let $\phi, k$ and $\rho$ be non-negative functions defined on $[0,\infty)$ such that
    \begin{enumerate}
      \item $\phi(0) = 0$, $\phi$ is convex and monotonically increasing, $\phi'' \ge 0$ and $t \mapsto t\phi'(t)$ is convex,
      \item $k$ is monotonically decreasing, and
      \item $\rho$ is monotonically increasing.
    \end{enumerate}
    Then for measurable functions $f$ and $g$ on $\sphere$
    \begin{align*}
      \integral[\sphere]{
	\integral[\sphere]{
	  & \phi \left( \frac{|f(x)-g(y)|}{\rho(d(x,y))} \right) k(d(x,y))
	}[\haumeas{n}(y)]
      }[\haumeas{n}(x)] \\
      \ge & \integral[\sphere]{
	\integral[\sphere]{
	  \phi \left( \frac{|f^*(x)-g^*(y)|}{\rho(d(x,y))} \right) k(d(x,y))
	}[\haumeas{n}(y)]
      }[\haumeas{n}(x)].
    \end{align*}
    If $k$ is strictly decreasing and $\phi$ is strictly convex, then equality holds if and only if $f(x) = \lambda f^*(\theta x)$ and $g(x) = \lambda g^*(\theta x)$ for a.e. $x \in \sphere$, where $\lambda \in \simpleset{+1,-1}$ and $\theta \in SO(n+1)$.
\end{theorem}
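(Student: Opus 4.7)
The plan is to prove this by iterated two-point symmetrization (polarization) on the sphere, the standard route for Riesz-type rearrangement inequalities. One first establishes a scalar four-point inequality encoding monotonicity under a single reflection, then passes to the full spherical rearrangement by a limiting argument.

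For a unit vector $u \in \sphere$, let $H^+_u := \set{x \in \sphere}{x \cdot u \geq 0}$ be the closed hemisphere and $\sigma_u$ the reflection of $\sphere$ across $u^\perp$. Define the two-point rearrangement
\[
	f^{\sigma_u}(x) := \begin{cases}\max\{f(x), f(\sigma_u x)\}, & x \in H^+_u, \\ \min\{f(x), f(\sigma_u x)\}, & x \in \sphere \setdiff H^+_u, \end{cases}
\]
and analogously $g^{\sigma_u}$. Because $\sigma_u$ is a $d$-isometry and $d(x,y) \leq d(x,\sigma_u y)$ whenever $x, y \in H^+_u$, the integral decomposes over generic four-point orbits $\{x, \sigma_u x, y, \sigma_u y\}$ (with $x, y \in H^+_u$) involving only the two distances $d_1 := d(x,y) \leq d_2 := d(x, \sigma_u y)$. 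Writing $a = f(x)$, $b = f(\sigma_u x)$, $c = g(y)$, $d = g(\sigma_u y)$, $k_j = k(d_j)$, $\rho_j = \rho(d_j)$, the full inequality reduces to the scalar four-point estimate
\[
	k_1 \bigl[\phi(|a-c|/\rho_1) + \phi(|b-d|/\rho_1)\bigr] + k_2 \bigl[\phi(|a-d|/\rho_2) + \phi(|b-c|/\rho_2)\bigr] \geq F^\sigma(a,b,c,d),
\]
where $F^\sigma(a,b,c,d)$ is the same expression with $(a,b)$ and $(c,d)$ replaced by their max/min-ordered pairs.

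The main obstacle is this scalar four-point inequality; it is precisely where all three hypotheses on $\phi$ are used. A case split on the orderings of $a$ vs $b$ and of $c$ vs $d$ leaves one nontrivial configuration in which left and right differ by an expression weighted by $k_1 \geq k_2$ while $\rho_1 \leq \rho_2$. Convexity of $\phi$ together with $\phi'' \geq 0$ handles same-distance rearrangements, while the convexity of $t \mapsto t\phi'(t)$ controls the mixed term in which $k$ and $\rho$ move in opposite directions; this latter is the key content. I would carry this out by writing $\phi(s) = \integral[0][s]{\phi'(t)}[t]$ and performing an exchange of variables to reduce to a single-variable comparison of the weights $k_1/\rho_1$ versus $k_2/\rho_2$.

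Once the polarization inequality is established, I would iterate it along a countable dense sequence of reflections $\{\sigma_{u_i}\}_{i \geq 1}$. By a theorem of Brock--Solynin the polarized iterates then converge in $L^p(\sphere)$ to the spherical rearrangements $f^*, g^*$ simultaneously, and Fatou's lemma combined with monotonicity at each step yields the main inequality in the limit. For the equality case, strict monotonicity of $k$ and strict convexity of $\phi$ make the scalar four-point inequality strict except when $(f(x), f(\sigma_u x))$ and $(g(y), g(\sigma_u y))$ already lie in polarized order for every reflection $\sigma_u$; propagating this rigidity through a dense family of reflections forces $f$ and $g$ to coincide with spherical rearrangements of $|f|, |g|$ about a common axis $\theta e$, up to a common sign $\lambda \in \simpleset{+1,-1}$ reflecting that $f^*, g^*$ are defined from $|f|, |g|$.
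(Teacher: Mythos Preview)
The paper does not give its own proof of this statement: Theorem~\ref{th:becknerrearr} is quoted verbatim from Beckner \cite[Theorem 3]{beckner} and then used as a black box in the proof of Theorem~\ref{IsoperimetricInequality}. There is therefore nothing in the paper to compare your attempt against.

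For what it is worth, your polarization outline is the standard route and is essentially how Beckner himself argues. You have correctly reduced to the scalar four-point inequality over an orbit $\{x,\sigma_u x,y,\sigma_u y\}$ with $d_1\le d_2$, and correctly identified the single nontrivial case (opposite orderings of $(a,b)$ and $(c,d)$) as the place where all three hypotheses on $\phi$ enter. The passage from iterated polarizations to $f^*,g^*$ via Brock--Solynin and Fatou, and the rigidity argument for equality, are also along the right lines. The one genuinely thin spot is your treatment of the four-point inequality itself: the sentence ``convexity of $t\mapsto t\phi'(t)$ controls the mixed term \dots\ I would write $\phi(s)=\int_0^s\phi'$ and exchange variables'' is precisely the heart of the lemma and is not yet a proof. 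That step requires a careful monotonicity argument in the parameter $\rho$ (or equivalently a second-derivative computation in the distance variable), and merely representing $\phi$ as an integral of $\phi'$ does not by itself produce the needed comparison between the $k_1/\rho_1$ and $k_2/\rho_2$ contributions. If you intend to supply a self-contained proof rather than cite Beckner, that is the paragraph that needs to be written out in full.
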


\begin{proof}[Theorem \ref{IsoperimetricInequality}]
  For a Borel set $E \subseteq \sphere$ note that 
  \begin{equation*}
    2\cdot P_s(E) = \integral[\sphere]{
      \integral[\sphere]{
	|\indicator_E(x) - \indicator_E(y)|^p d(x,y)^{-(n+s)}
      }[\haumeas{n}(y)]
    }[\haumeas{n}(x)]
  \end{equation*}
  where $p > 1$ is arbitrary.
  
For \(-n<s< 1\) apply Theorem \ref{th:becknerrearr} with \(\phi(t)=t^p\), \(k(t)=\frac{1}{(1+t)^{n+s}}\) and \(\rho(t)=(\frac{t}{1+t})^{\frac{n+s}{p}}\).
Note further that 
\[
  P_{-n}(E)=\int_E \int_{E^c} 1 \, \mathrm{d}\haumeas{n}(y) \, \mathrm{d}\haumeas{n}(x)=\haumeas{n}(E)(\haumeas{n}\left(\mathbb{S}^n)-\haumeas{n}(E)\right)
\]
and this quantity is always the same for \(E\)'s of same Hausdorff measure, especially for  \(E^*\).
For \(-\infty < s < -n\) apply Theorem \ref{th:becknerrearr} with \(\phi(t)=t^p\), \(k(t)=\pi^{-n-s}-t^{-n-s}\) and \(\rho(t)=1\). Then substract \(2\cdot\pi^{-n-s}P_{-n}(E)\) on both sides of the inequality. 
\end{proof}

\section{Convergence of fractional perimeters as $s \nearrow 1$}

We start with a result for subsets of intervals and show that only the behaviour in a neighbourhood of their boundary points contributes to the limit:

\begin{lemma} \label{intervalper}
	Let $0 < s < 1$ and $I \subseteq \Real$ be a (possibly unbounded) closed interval.
	Suppose that $E = \bigcup_{i=1}^M [a_i, b_i] \subseteq I$, where $M \in \Natural$ and $a_1 < b_1 < a_2 < \dots < a_M < b_M$, and that $I \setdiff E = \bigcup_{k=1}^N J_k$ is the corresponding decomposition into pairwise disjoint intervals $J_k \subseteq \Real$. 
	Furthermore, let $l$, the minimal length of any of the intervals $[a_i,b_i]$ and $J_k$, be greater than 0.
	Then for any $0 < \epsilon < \frac l 2$
	\begin{equation} \label{eq:intervalper}
		\lim_{s \nearrow 1} (1-s)\integral[E]{
			\integral[I \setdiff E]{
				\frac 1 {
					|x-y|^{1+s}
				}
			}[y]
		      } = \lim_{s \nearrow 1} (1-s) \displaystyle \iint\limits_{F_\epsilon} \frac{1}{|x-y|^{1+s}}\, \mathrm{d}y\, \mathrm{d}x = \haumeas{0}(\bd E)
	\end{equation}
	where $F_{\epsilon} := \set{(x,y) \in E \times (I \setdiff E)}{|x-y| < \epsilon}$ and $\bd E$ is the boundary of $E$ with respect to the relative topology on $I$.
	%Then, for any $0 < \epsilon < \frac l 2$
	%\begin{equation} \label{eq:intervalpereps}
		%\lim_{s \nearrow 1} (1-s)
		%\displaystyle \iint\limits_{M_{\epsilon}} \frac 1 {|x-y|^{1+s}}\, \mathrm{d}y\, \mathrm{d}x = \haumeas{0}(\bd E),%,\ \textrm{and} \\
		%\lim_{s \to 1-} (1-s)\integral[]{
			%\integral[\simpleset{|x-y| \ge \epsilon}]{
				%\frac 1 {
					%|x-y|^{1+s}
				%}
			%}[y]
		%} &\le C\epsilon.
	%\end{equation}
\end{lemma}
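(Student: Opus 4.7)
The plan is to split the inner integral at distance $\epsilon$, show that the contribution from $|x-y| \ge \epsilon$ is annihilated by the factor $(1-s)$ in the limit, and then localize the remaining near-diagonal part at each relative boundary point of $E$, where the integral is elementary.

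First I would verify that only $F_\epsilon$ contributes in the limit, which is the first equality in (\ref{eq:intervalper}). Since $E$ is a finite union of bounded intervals, $\haumeas{1}(E) < \infty$, and for each $x \in E$ the inner integral over $\set{y \in I \setdiff E}{|x-y| \ge \epsilon}$ is dominated by $2\int_\epsilon^\infty r^{-(1+s)} \, \mathrm{d}r = 2/(s\epsilon^s)$, uniformly in $s \in [1/2,1)$. Integrating over $E$ gives a bound uniform in $s$, which is killed by the prefactor $(1-s)$.

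Next I would decompose $F_\epsilon$ according to proximity to points of $\bd E$. The hypothesis $\epsilon < l/2$ forces every pair $(x,y) \in F_\epsilon$ to lie near a unique $c \in \bd E$, with $x$ and $y$ on opposite sides of $c$: if $x \in [a_i, b_i]$ were at distance $\ge \epsilon$ from both $a_i$ and $b_i$, then $y$ would itself fall into $[a_i, b_i] \subseteq E$, contradicting $y \in I \setdiff E$; and $\epsilon < l/2$ prevents $x$ from being within $\epsilon$ of both endpoints simultaneously. Hence $F_\epsilon$ splits into pairwise disjoint ``corners'' indexed by $c \in \bd E$. Importantly, if some $a_i$ or $b_j$ coincides with an endpoint of $I$ then no corner arises there, consistent with such points not belonging to the relative boundary.

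Finally I would evaluate the contribution from one corner. After translating $c$ to the origin and setting $u = |x-c|$, $v = |y-c|$, the contribution becomes
\[
\iint_{\{u,v>0,\,u+v<\epsilon\}} (u+v)^{-(1+s)} \, \mathrm{d}u \, \mathrm{d}v = \int_0^\epsilon w \cdot w^{-(1+s)} \, \mathrm{d}w = \frac{\epsilon^{1-s}}{1-s},
\]
via the substitution $w = u+v$ (parametrizing the triangle by $0 < u < w < \epsilon$). Multiplying by $(1-s)$ and sending $s \nearrow 1$ yields $\epsilon^{1-s} \to 1$ per boundary point, so summing over $\bd E$ gives the target value $\haumeas{0}(\bd E)$. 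The main obstacle is the bookkeeping in the localization step, namely verifying that the corners correspond exactly to the \emph{relative} boundary $\bd E$ of $E$ in $I$ rather than the possibly larger set $\{a_1, b_1, \ldots, a_M, b_M\}$; once this is settled the remaining computations are direct.
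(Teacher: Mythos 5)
Your proposal is correct, and both its localization at boundary points and its corner computation agree in substance with the paper's proof (the corner integral $\int_0^\epsilon w^{-s}\,\mathrm dw = \frac{\epsilon^{1-s}}{1-s}$ is exactly the quantity $\frac 1s\bigl(\frac{\epsilon^{1-s}}{1-s}-\epsilon^{1-s}\bigr)$ appearing there, and you treat the endpoints of $I$ with the same case distinction). The organization differs in one respect: the paper proves the two equalities in \eqref{eq:intervalper} independently, obtaining the first by evaluating the double integral over every pair consisting of a component $[a_i,b_i]$ and a gap $J_k$ in closed form (following Ludwig's Lemma 1) and observing that only adjacent pairs survive the factor $(1-s)$, and the second by the corner computation. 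You instead derive the first equality from the second via the crude uniform bound $\int_{\{|x-y|\ge\epsilon\}}|x-y|^{-(1+s)}\,\mathrm dy \le 2/(s\epsilon^s)$ combined with $\haumeas{1}(E)<\infty$, so that the far-field contribution is $O(1)$ uniformly in $s$ and is annihilated by $(1-s)$. This is slightly more economical — only one explicit integral needs to be computed — at the cost of not producing the exact value of the full double integral for fixed $s$, which the paper's term-by-term evaluation does. Your bookkeeping for the corners (uniqueness of the nearby boundary point from $\epsilon<l/2$, emptiness of the corner when $a_1=\min I$ or $b_M=\max I$, matching with the boundary relative to $I$) is the same point the paper makes and is handled correctly.
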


\begin{proof}
	The leftmost limit in \eqref{eq:intervalper} was evaluated in \cite[Lemma 1]{ludwig-perimeter} for the case $I = \Real$.

	%If $\alpha := \inf I > -\infty$, then $l > 0$ implies that $\alpha \neq a_1$ and an easy calculation shows for $i = 1,\dots,m$
	For any interval $J_k \subseteq I \setdiff E$ with endpoints $-\infty<\alpha < \beta<\infty$ (the case \(\alpha=-\infty\) or \(\beta = +\infty\) works analogously) we have 
	\begin{equation*}
		\integral[\alpha][\beta]{
			\integral[a_i][b_i]{
				\frac 1 {
					|x-y|^{1+s}
				}
			}[y]
		} = \frac 1 {s(1-s)} [ (a_i - \alpha)^{1-s} - (a_i-\beta)^{1-s} + (b_i-\beta)^{1-s} - (b_i-\alpha)^{1-s} ].
	\end{equation*}
	In the cases that $\beta = a_i$ or $\alpha = b_i$ we thus get
	\begin{equation*}
	  \lim_{s \nearrow 1} (1-s) \integral[\alpha][\beta]{
	    \integral[a_i][b_i]{
	      \frac 1 {|x-y|^{1+s}}
	    }[y]
	  } = 1,
	\end{equation*}
	otherwise this limit is equal to $0$.
	If $a_1 = \min I$, then $a_1$ does not lie in the boundary of $E$ relative to $I$ and is not an endpoint of any interval $J_k$, thus
	\begin{equation*}
	  \lim_{s \nearrow 1} (1-s) \sum_{k=1}^N \integral[a_1][b_1]{
	    \integral[J_k]{
	      \frac 1 {|x-y|^{1+s}}
	    }[y]
	  } = 1,
	\end{equation*}
	since only the boundary point $b_1$ contributes to the limit.
	Otherwise, if $a_1 > \min I$, then the above limit is equal to $2$.
	A similar distinction is necessary for $b_M$ and $\max I$.
	Summing up over all $i = 1,\dots,M$ leads to the first identity in \eqref{eq:intervalper}.
	%\begin{equation*}
		%\lim_{s \nearrow 1} (1-s) \sum_{i=1}^M \integral[\alpha][\beta]{
			%\integral[a_i][b_i]{
				%\frac 1 {|x-y|^{1+s}}
			%}[y]
		%} = 1,
	%\end{equation*}
	%since $J_k$ is neighbo
	%The calculation is analogous if $\beta := \sup I < \infty$.

	To see the second identity in \eqref{eq:intervalper}, we only need to evaluate integrals of the form
	\begin{equation}
		\integral[a][a + \epsilon]{
			\integral[x-\epsilon][a]{
				\frac 1 {
					|x-y|^{1+s}
				}
			}[y]
		} = \integral[b - \epsilon][b]{
			\integral[b][x+\epsilon]{
				\frac 1 {
					|x-y|^{1+s}
				}
			}[y]
		} = \frac 1 s \left( \frac{\epsilon^{1-s}}{1-s} - \epsilon^{1-s} \right).
	\end{equation}
	It is easy to see that after multiplying all sides with the factor $(1-s)$, the right-hand side tends to $1$ as $s \nearrow 1$.
	Now a similar argument as before, taking into account the position of $a_1$ and $b_M$ relative to $I$, yields the second equality in formula \eqref{eq:intervalper}.
	%Summing up over all $i = 1,\dots,m$ yields the second equality in formula \eqref{eq:intervalper}.
	%For each $s \in (0,1)$ consider the function $f_s(r) := r^{-1-s}$, $r \in (0,\infty)$.
	%It has $F_s(r) = -\frac{r^{-s}}{s}$ as indefinite integral, and $F_s^{(2)}(r) := \integral[]{F_s(r)}[r] = - \frac{r^{1-s}}{s(1-s)}$.
	%Since $\lim_{s \to 1-} (1-s) F_s^{(2)}(r) = -1$, plugging in $r = |x-y|$ we have for two intervals $(a_i,b_i)$ and $(\beta_j,\alpha_j)$ with at most one common point
	%Write $(a,b) \setdiff E = \bigcup_{i=0}^m (\beta_i, \alpha_{i+1})$, where $\beta_0 = a$, $(\beta_i, \alpha_{i+1}) = (b_i,a_{i+1})$ for $i = 1,\dots,m-1$, and $\alpha_{m+1} := b$.
	%Then for each $i = 1,\dots,m$ and $j = 0,\dots, m$.
\end{proof}

\medskip
As a simple application we get a convergence result for subsets on a curve:

\begin{corollary}
	Let $I \subseteq \Real$ be a (possibly unbounded) closed interval and $\gamma: I \to \Real^n$ be a simple $C^1$ curve.
	%Let $\gamma: I \to \Real^n$ be a $C^1$ curve with only finitely many self-intersections which is closed in case $I$ is a finite interval.
	If $E \subseteq I$ is a finite union of closed and pairwise disjoint intervals, then
	\begin{equation} \label{eq:curveper}
		\lim_{s \nearrow 1} (1-s)\integral[\gamma(E)]{
			\integral[\gamma(I) \setdiff \gamma(E)]{
				\frac 1 {
					d_\gamma(x,y)^{1+s}
				}
			      }[\haumeas{1}(y)]
			    }[\haumeas{1}(x)] = \haumeas{0}(\bd \gamma(E)),
	\end{equation}
	where $\bd \gamma(E)$ is the boundary of $\gamma(E)$ relative to $\gamma(I)$ and $d_\gamma(\cdot,\cdot)$ denotes the distance on the curve, i.e. if $x = \gamma(t_1)$ and $y = \gamma(t_2)$, then
	\begin{equation*}
		d_\gamma(x,y) = \left| \integral[t_1][t_2]{
			|\gamma'(t)|
		}[t] \right|.
	\end{equation*}
	\vspace{-5mm}
	% Moreover, for $\epsilon > 0$ sufficiently small,
	% \begin{equation*}
		% \lim_{s \to 1-} (1-s)\integral[]{
			% \integral[\simpleset{d(x,y)< \epsilon}]{
				% \frac 1 {
					% d(x,y)^{1+s}
				% }
			% }[y]
		% } = \haumeas{0}(\bd \gamma(E)).
	% \end{equation*}
\end{corollary}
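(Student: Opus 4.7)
The plan is to reduce the double integral along the curve $\gamma$ to the one-dimensional situation treated in Lemma \ref{intervalper} by passing to the arc-length reparametrization of $\gamma$. First I would introduce the arc-length function $\sigma: I \to \tilde I$ defined by $\sigma(t) := \integral[t_0][t]{|\gamma'(u)|}[u]$ for a fixed basepoint $t_0 \in I$, where $\tilde I := \sigma(I)$ is an interval in $\Real$. Simpleness of $\gamma$ together with its $C^1$ regularity forces $\sigma$ to be strictly increasing: if $|\gamma'|$ vanished identically on a subinterval, then $\gamma$ would be constant there, contradicting injectivity. Hence $\sigma$ is a homeomorphism from $I$ onto $\tilde I$.

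Next, let $\tilde \gamma := \gamma \circ \sigma^{-1}: \tilde I \to \Real^n$ be the arc-length parametrization. Then $d_{\tilde \gamma}(\tilde \gamma(\tau_1), \tilde \gamma(\tau_2)) = |\tau_1 - \tau_2|$, and since both $d_\gamma$ and $\haumeas{1}$ are invariant under $C^1$-reparametrization, the substitution $x = \tilde \gamma(\tau_1)$, $y = \tilde \gamma(\tau_2)$ converts the double integral on the left-hand side of \eqref{eq:curveper} into
\begin{equation*}
\integral[\tilde E]{\integral[\tilde I \setdiff \tilde E]{\frac{1}{|\tau_1 - \tau_2|^{1+s}}}[\tau_2]}[\tau_1],
\end{equation*}
where $\tilde E := \sigma(E)$. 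Because $\sigma$ is a strictly increasing homeomorphism, $\tilde E$ is itself a finite union of closed, pairwise disjoint intervals of positive length, and all the gaps between them in $\tilde I$ likewise have positive length, so the hypotheses of Lemma \ref{intervalper} are satisfied. Multiplying by $(1-s)$ and letting $s \nearrow 1$, the lemma yields the limit $\haumeas{0}(\bd \tilde E)$.

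Finally, since $\tilde \gamma$ is a homeomorphism from $\tilde I$ onto $\gamma(I)$, it maps the boundary of $\tilde E$ relative to $\tilde I$ bijectively onto the boundary of $\gamma(E)$ relative to $\gamma(I)$, giving $\haumeas{0}(\bd \tilde E) = \haumeas{0}(\bd \gamma(E))$ and completing the proof. The main obstacle is the verification that arc-length reparametrization is genuinely a measure-preserving homeomorphism even though $|\gamma'|$ may vanish at isolated points; simpleness rules out $|\gamma'| \equiv 0$ on any open subinterval, which is exactly what is needed for $\sigma$ to be strictly monotone. A minor technical point is the case where $I$ is unbounded, but Lemma \ref{intervalper} explicitly accommodates unbounded intervals, so no additional argument is required.
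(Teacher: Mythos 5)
Your proof is correct and takes essentially the same route as the paper's: reduce to an arc-length parametrization so that $d_\gamma$ becomes $|\tau_1-\tau_2|$ and the line integrals become the one-dimensional integrals of Lemma \ref{intervalper}. You additionally verify that the arc-length map $\sigma$ is a strictly increasing homeomorphism even if $|\gamma'|$ vanishes at isolated points, a detail the paper's one-line reduction glosses over.
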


\begin{proof}
	Since line integrals do not depend on the parametrization of the curve, we can assume that $\gamma$ is an arc-length parametrization, i.e. $|\gamma'(t)| = 1$ for all $t \in I$.
	The line integrals in \eqref{eq:curveper} can thus be rewritten as
	\begin{equation*}
		\integral[\gamma(E)]{
			\integral[\gamma(I) \setdiff \gamma(E)]{
				\frac 1 {
					d_\gamma(x,y)^{1+s}
				}
			      }[\haumeas{1}(y)]
			    }[\haumeas{1}(x)] = \integral[E]{
			\integral[I \setdiff E]{
				\frac 1 {|t - u|^{1+s}}
			}[u]
		}[t].
	\end{equation*}
	The result then follows from Lemma \ref{intervalper}.
	%If $\gamma$ is not closed, then we can readily apply Lemma \ref{intervalper}.
	%If $\gamma$ is a simple closed curve, then $I = [a,b]$ is a compact interval and $\gamma(a) = \gamma(b)$.
	%Furthermore, if $\min E = a$ and $\max E = b$ then by the same lemma the point $\gamma(a)=\gamma(b)$ does not contribute to the boundary even thoug

\end{proof}

\medskip
%The following spherical Blaschke-Petkantschin formula can be found in more generality in \cite[Theorem 1]{MR1131953};
%we state the result for the case of double integrals, where we average over all great circles on the sphere, and give an alternative proof using the Blaschke-Petkantschin formula for double integrals in $\Real^{n+1}$.

In \cite[Theorem 1]{MR1131953} a higher order kinematic formula on the sphere is proven from which a spherical Blaschke-Petkantschin formula follows. 
The direct statement of the spherical Blaschke-Petkantschin formula with a shorter and better accessible proof can be found in \cite[Lemma 5.3]{hug-thaele}. 
In the case of double integrals the formula reads as follows. 

\begin{theorem}[spherical Blaschke-Petkantschin formula] \label{sphBP}
	Let $f: \sphere \times \sphere \to [0,\infty)$ be a measurable function.
	Then
	\begin{align}
	  \notag
		\integral[\sphere]{
			\integral[\sphere]{
				f(x,y) &
			}[\haumeas{n}(y)]
		}[\haumeas{n}(x)]  \\
		\label{eq:sphBP}
		&= c_n \integral[\grass(n+1,2)]{
			\integral[\sphere \cap L]{
				\integral[\sphere \cap L]{
					f(x,y) \nabla_2^{n-1}(x,y)
				}[\haumeas 1(y)]
			}[\haumeas{1}(x)]
		}[L],
	\end{align}
	where $c_n := \frac{\sphsurf{n+1}\sphsurf n}{\sphsurf 1 \sphsurf 2}$ and $\nabla_2(x,y)$ denotes the area of the parallelogram spanned by $x$ and $y$.
\end{theorem}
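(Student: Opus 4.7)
My plan is to interpret both sides of \eqref{eq:sphBP} as integrals of $f$ against Borel measures on $\sphere \times \sphere$ and show that these two measures coincide via a symmetry-reduction argument. Both the product measure $\haumeas{n} \otimes \haumeas{n}$ and the measure $\nu$ implicitly defined by the right-hand side are invariant under the diagonal action of $SO(n+1)$. For $\nu$ this follows from $SO(n+1)$-invariance of the Haar measure $dL$ on $G(n+1,2)$, of $\haumeas{1}$ on each great circle $\sphere \cap L$, and of $\nabla_2(x,y)$, which depends only on $x \cdot y$.

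The key reduction is that any finite $SO(n+1)$-invariant Borel measure on $\sphere \times \sphere$ is uniquely determined by its pushforward to $[-1,1]$ via the map $(x,y) \mapsto x \cdot y$. Indeed, $SO(n+1)$ acts transitively on each level set $\{x \cdot y = c\}$ for $c \in (-1,1)$ (first send $x_1$ to $x_2$ by an element of $SO(n+1)$, then use the isotropy subgroup $SO(n)$ to align the second coordinates), so each orbit carries a unique $SO(n+1)$-invariant probability measure and disintegration identifies any such invariant measure with a measure on the orbit space $[-1,1]$. The degenerate orbits $c = \pm 1$ correspond to the diagonal and antidiagonal in $\sphere \times \sphere$ and are $\haumeas{n}\otimes\haumeas{n}$-null. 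Hence it suffices to verify the identity for test functions of the form $f(x,y) = g(x \cdot y)$.

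For such $f$, the left-hand side evaluates by spherical coordinates centered at $x$ (parametrizing $y = \cos\theta\, x + \sin\theta\, u$ with $\theta \in (0,\pi)$ and $u \in \sphere \cap x^\perp$, so that $\haumeas{n}(y) = \sin^{n-1}\theta\, d\theta\, \haumeas{n-1}(u)$) to $\sphsurf{n+1}\sphsurf{n}\integral[0][\pi]{g(\cos\theta)\sin^{n-1}\theta}[\theta]$. For the right-hand side the inner double integral is independent of $L$: using $\nabla_2(x,y) = |\sin\theta(x,y)|$ on the great circle $\sphere \cap L$ of length $\sphsurf{2}$, a direct computation yields $2\sphsurf{2}\integral[0][\pi]{g(\cos\theta)\sin^{n-1}\theta}[\theta]$. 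Matching the two (and using $\sphsurf{1}=2$) forces $c_n = \sphsurf{n+1}\sphsurf{n}/(\sphsurf{1}\sphsurf{2})$.

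The main obstacle is rigorously justifying the disintegration step: establishing the transitivity of the $SO(n+1)$-action on the nontrivial level sets and the general fact that invariant Borel measures on compact homogeneous spaces are uniquely determined by their pushforward to the orbit space. Once this framework is in place, the explicit verification for test functions $g(x \cdot y)$ is entirely elementary.
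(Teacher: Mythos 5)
Your argument is essentially correct, but note first that the paper does not prove this theorem at all: it is quoted from the literature (Arbeiter--Z\"ahle and, in the form stated, Hug--Th\"ale, Lemma~5.3), so your proposal supplies a self-contained proof where the paper offers only a citation. Your two computations check out: with $f(x,y)=g(x\cdot y)$ the left side equals $\sphsurf{n+1}\sphsurf{n}\int_0^\pi g(\cos\theta)\sin^{n-1}\theta\,\mathrm{d}\theta$, and for each $L$ the inner double integral equals $\sphsurf{1}\sphsurf{2}\int_0^\pi g(\cos\theta)\sin^{n-1}\theta\,\mathrm{d}\theta$ (using $\nabla_2(x,y)=\sin\sphericalangle(x,y)$ and that each point of the great circle sees the angle $\theta$ twice), which indeed forces $c_n=\sphsurf{n+1}\sphsurf{n}/(\sphsurf{1}\sphsurf{2})$. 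The step you flag as the main obstacle can be discharged without any disintegration theory: for continuous $f$ set $\bar f(x,y):=\int_{SO(n+1)}f(\theta x,\theta y)\,\mathrm{d}\theta$ with respect to the Haar probability measure; diagonal invariance of both measures gives $\mu(f)=\mu(\bar f)$ and $\nu(f)=\nu(\bar f)$, and $\bar f$ is continuous and constant on orbits, hence of the form $g(x\cdot y)$ (here you do need $n\ge 2$ so that the stabilizer $SO(n)$ acts transitively on the level sets $\{x\cdot y=c\}$, $-1<c<1$; for $n=1$ these split into two orbits, but then the formula is trivial since $G(2,2)$ is a point and $\nabla_2^{0}\equiv 1$). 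Two finite Borel measures agreeing on all continuous functions coincide, and the identity extends to all nonnegative measurable $f$ by a monotone class argument. So your route is sound and arguably more elementary than the kinematic-formula machinery the paper points to, at the price of being special to the two-fold (codimension-one in $L$) case, whereas the cited results cover general $q$-fold Blaschke--Petkantschin formulas.
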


Note that for points $x, y \in \sphere$ on the sphere
	\begin{equation*}
		\nabla_2(x,y) = \sqrt{1- (x \cdot y)^2} = \sin \sphericalangle (x,y),
	\end{equation*}
	where $\sphericalangle (x,y)$ is the unorientated angle between $x$ and $y$.

%\begin{proof}
	%Let $x = ru$ and $y = tv$ be the polar coordinate decompositions of $x, y \in \Real^{n+1} \setdiff \simpleset 0$, where $r,t \in [0,\infty)$ and $u,v \in \sphere$.
	%Then define
	%\begin{equation*}
		%g(x,y) := f(u,v) \frac{e^{-r}}{r^{n}} \frac{e^{-t}}{t^{n}}.
	%\end{equation*}
	%The linear Blaschke-Petkantschin formula for double integrals in $\Real^{n+1}$ states that (c.f. \cite[Theorem 7.2.1]{MR2455326})
	%\begin{equation*}
		%\integral[\Real^{n+1}]{
			%\integral[\Real^{n+1}]{
				%g(x,y)
			%}[y]
		%} = c_n \integral[\grass(n+1,2)]{
			%\integral[L]{
				%\integral[L]{
					%g(x,y) \nabla_2^{n-1}(x,y)
				%}[\haumeas{2}(y)]
			%}[\haumeas{2}(x)]
		%}[L].
	%\end{equation*}
	%Using polar coordinates the left-hand side reads as
	%\begin{equation*}
		%\integral[\sphere]{
			%%\integral[\sphere]{
				%f(u,v)
			%}[\haumeas{n}(v)]
		%}[\haumeas{n}(u)].
	%\end{equation*}
	%Since $\nabla_2 (ru,tv) = rt \nabla_2(u,v)$ for all $r,t > 0$ and $u,v \in \sphere$, polar coordinate decomposition in $L$ on the right-hand side yields
	%\begin{equation*}
		%c_n \integral[\grass(n+1,2)]{
			%\integral[\sphere \cap L]{
				%\integral[\sphere \cap L]{
					%f(u,v) \nabla_2^{n-1}(u,v)
				%}[\haumeas{1}(v)]
			%}[\haumeas{1}(u)]
		%}[L].
	%\end{equation*}
%\end{proof}

\medskip
The next result, a spherical Crofton formula, identifies the average number of intersections of a polyconvex subset $E \subseteq \sphere$ with great circles as its perimeter:

\begin{theorem}[spherical Crofton formula] \label{sphericalcrofton}
	Let $E \subseteq \sphere$ be an $n$-dimensional polyconvex subset of $\sphere$.
	Then
	\begin{equation*}
		\integral[\grass(n+1,2)]{
			\haumeas{0}(\bd E \cap L)
		}[L] = \frac {2} {\sphsurf{n}} \haumeas{n-1}(\bd E).
	\end{equation*}
\end{theorem}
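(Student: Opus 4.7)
My plan is to exploit the $SO(n+1)$-invariance of the Haar measure $\mathrm{d}L$ on $G(n+1,2)$ and show that the left-hand side is a constant multiple of $\haumeas{n-1}(\bd E)$, whose value is then pinned down by testing on a hemisphere. Since $E$ is polyconvex, $\bd E$ is $(n-1)$-rectifiable with a well-defined tangent hyperplane $T_x \bd E \subseteq T_x \sphere$ at $\haumeas{n-1}$-a.e.\ point $x$, and a Sard-type argument ensures that for $\mathrm{d}L$-a.e.\ $L \in G(n+1,2)$ the great circle $\sphere \cap L$ meets $\bd E$ transversally in a finite set, so the integrand on the left is $\mathrm{d}L$-a.e.\ finite and measurable.

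Next, I would introduce the incidence variety
\[
Z := \{(x, L) \in \bd E \times G(n+1, 2) : x \in L\},
\]
whose dimension $(n-1)+(n-1) = 2n-2$ matches that of $G(n+1,2)$. Applying the area formula to the projection $\pi : Z \to G(n+1,2)$ and disintegrating over $\bd E$ in the opposite direction yields
\[
\int_{G(n+1,2)} \haumeas{0}(\bd E \cap L)\, \mathrm{d}L = \int_{\bd E} \Psi(x, T_x \bd E)\, \mathrm{d}\haumeas{n-1}(x),
\]
where $\Psi(x, V)$ is the integral of $|J\pi|(x,L)$ over the fiber $\{L \in G(n+1,2) : L \ni x\}$, which carries a canonical $SO(x^\perp)$-invariant probability measure. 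Since $|J\pi|$ at $(x,L)$ depends only on the angle between $T_x(\sphere \cap L)$ and the hyperplane $V \subseteq T_x \sphere$, and since $SO(T_x\sphere) \cong SO(n)$ acts transitively on the hyperplanes of $T_x \sphere$, the quantity $\Psi(x, V)$ is a single constant $C_n$ depending only on $n$.

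To determine $C_n$, I would apply the identity just obtained to $E$ equal to a closed hemisphere. Then $\bd E$ is a great $(n-1)$-subsphere with $\haumeas{n-1}(\bd E) = \sphsurf n$, while for $\mathrm{d}L$-a.e.\ $L \in G(n+1,2)$ the line $L \cap \{x_{n+1}=0\}$ meets $\bd E$ in exactly two antipodal points, so $\haumeas{0}(\bd E \cap L) = 2$ for almost every $L$ and the left-hand side evaluates to $2$. This forces $C_n = 2/\sphsurf n$, which is the claimed constant.

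The main obstacle is the rigorous bookkeeping of the Jacobian $|J\pi|$ and of the normalizations of the invariant measures on $G(n+1,2)$ and its fibers $\{L \ni x\}$; crucially, though, the argument only requires that $\Psi$ is finite and independent of $x$ and $V$, after which the hemisphere test case fixes $C_n$ without any further computation. A cleaner alternative would be to invoke a spherical Hadwiger-type characterization of continuous, $SO(n+1)$-invariant, additive functionals on polyconvex sets, reducing the proof to verifying proportionality to $\haumeas{n-1}(\bd E)$ and then evaluating on the hemisphere.
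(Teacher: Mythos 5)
Your argument is correct in outline but follows a genuinely different route from the paper. The paper does not build the formula from scratch: it quotes the spherical Crofton formula for curvature measures, $\int_{G(n+1,2)}\varphi_0(E\cap L,\sphere\cap L)\,\mathrm{d}L=\varphi_{n-1}(E,\sphere)$, from Schneider--Weil, computes $\varphi_0$ of a circular arc explicitly via the normal-cone representation of curvature measures of spherical polytopes (obtaining $\varphi_0(E\cap L,\sphere\cap L)=\haumeas{0}(\bd E\cap L)/(2\sphsurf{1})$), and cites Glasauer for the identification $\varphi_{n-1}(E,\sphere)=\tfrac{1}{2\sphsurf{n}}\haumeas{n-1}(\bd E)$. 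You instead derive the identity directly by the classical incidence-variety/coarea argument plus invariance, and then fix the constant by testing on a hemisphere. Both are legitimate; the paper's route buys brevity and rigor by outsourcing the analytic content to known results on curvature measures (at the price of the valuation machinery and the need to reduce $E\cap L$ to arcs), whereas yours is self-contained in spirit, makes the origin of the constant $2/\sphsurf{n}$ transparent, and your normalization check on the hemisphere is exactly right given that $\mathrm{d}L$ is a probability measure and $\haumeas{n-1}$ of a great subsphere is $\sphsurf{n}$. The invariance step is also sound: the stabilizer $SO(x^{\perp})$ acts transitively both on the fiber $\{L\ni x\}$ and on the hyperplanes of $T_x\sphere$, and commutes with $\pi$, so $\Psi(x,V)$ is indeed a single constant. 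What remains to make your proof complete is the standard but nontrivial bookkeeping you acknowledge: rectifiability of $\bd E$ for polyconvex $E$, integrability of the fiber integrand (the Jacobian ratio degenerates when $L$ is tangent to $\bd E$), and the Sard-type statement that $\bd E\cap L$ is finite for a.e.\ $L$. By contrast, I would not lean on your proposed Hadwiger-type shortcut: neither $E\mapsto\int_G\haumeas{0}(\bd E\cap L)\,\mathrm{d}L$ nor $E\mapsto\haumeas{n-1}(\bd E)$ is obviously a continuous valuation on polyconvex sets in the form required, so that reduction would need as much justification as the coarea argument it is meant to replace.
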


\begin{proof}
	We rewrite the spherical Crofton formula for curvature measures, 
	\begin{equation} \label{eq:curvcrofton}
		\integral[\grass(n+1,2)]{
			\varphi_0(E \cap L, \sphere \cap L)
		}[L] = \varphi_{n-1}(E,\sphere),
	\end{equation}
	see \cite[p. 261]{MR2455326}, in terms of Hausdorff measures as follows:
	
	For spherical convex polytopes $P \subseteq \sphere$ and $m \in \simpleset{0,\dots,n-1}$, we have the formula
	\begin{equation*}
		\varphi_m(P,A) = \frac 1 {\sphsurf{m+1} \sphsurf{n-m}} \sum_{F \in \mathcal F_m(P)} \integral[F]{
			\integral[N(P,F)]{
				\indicator_{A \times \sphere} (x,u)
			}[\haumeas{n-m-1}(u)]
		}[\haumeas{m}(x)],
	\end{equation*}
	where $\mathcal F_m(P)$ denotes the set of all $m$-dimensional faces of $P$ and $N(P,F)$ is the normal cone to $P$ for $F$ (see \cite[Theorem 6.5.1]{MR2455326}). 

	Since $E \cap L$ is a finite union of disjoint arcs and $\varphi_0(\cdot,A)$ is a valuation, it suffices to calculate $\varphi_0(P,S)$, where $P$ is an arc on the great sphere $S = \sphere \cap L$.
	The set of $0$-dimensional faces $\mathcal F_0(P) = \simpleset{p_1,p_2}$ consists of the endpoints of the arc, and for every normal vector $u \in N(P,p_i), i=1,2$ we have $\indicator_{S \times \sphere}(p_i,u) = 1$, so it remains to compute $\haumeas{n-1}(N(P,p_i))$.

	We can think of $P$ as an intersection of the great sphere $S$ with two hemispheres, with normal vectors lying in the same plane as $S$ each.
	Therefore, the polar body $P^\circ$ of $P$ is again an intersection of two hemispheres, and since for $i=1,2$ the normal cone $N(P,p_i)$ consists of one of the bounding $(n-1)$-dimensional hemispheres, we have $\haumeas{n-1}(N(P,p_i)) = \frac{\sphsurf{n}}{2}$.

	In conclusion, for any polyconvex set $E \subseteq \sphere$ and any plane $L \in \grass(n+1,2)$, counting the components of $E \cap L$ yields 
	\begin{equation*}
		\varphi_0(E \cap L, \sphere \cap L) = \frac{\haumeas{0}(\bd E \cap L)}{2 \sphsurf{1}} .
	\end{equation*}
	Regarding the right-hand side of \eqref{eq:curvcrofton}, \cite[Satz 4.4.3]{glasauer} identifies the curvature measure $\varphi_{n-1}(E,\sphere)$ as $\frac 1 {2 \sphsurf{n}} \haumeas{n-1}(\bd E)$.
\end{proof}

\medskip
Now we turn to the proof of our first convergence result:

\begin{theorem} \label{sph-sto1}
	Let $E$ be a polyconvex subset of $\sphere$. Then
	\begin{equation} \label{eq:sphfrac}
		\lim_{s \nearrow 1} (1-s)\integral[E]{
			\integral[\compl E]{
				\frac 1 {
					d(x,y)^{n+s}
				}
			}[\haumeas{n}(y)]
		}[\haumeas{n}(x)] = \frac{\sphsurf{n+1}} {\sphsurf 2} \haumeas{n-1}(\bd E).
	\end{equation}
\end{theorem}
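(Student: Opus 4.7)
The plan is to use the spherical Blaschke--Petkantschin formula to foliate the double integral over $\sphere\times\sphere$ by great circles, invoke the one-dimensional result of Lemma \ref{intervalper} on each great circle, and then integrate the resulting pointwise limit over $\grass(n+1,2)$ via the spherical Crofton formula (Theorem \ref{sphericalcrofton}). Substituting $f(x,y) = \indicator_E(x)\indicator_{\compl E}(y)/d(x,y)^{n+s}$ into \eqref{eq:sphBP} and using $\nabla_2(x,y) = \sin d(x,y)$ for $x,y$ on the same great circle gives
\begin{equation*}
	P_s(E) \;=\; c_n \integral[\grass(n+1,2)]{I_s(L)}[L],
\end{equation*}
where
\begin{equation*}
	I_s(L) \;:=\; \integral[E \cap L]{\integral[\compl E \cap L]{\frac{1}{d(x,y)^{1+s}} \left( \frac{\sin d(x,y)}{d(x,y)} \right)^{n-1}}[\haumeas{1}(y)]}[\haumeas{1}(x)].
\end{equation*}

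The next step is to show the pointwise identity $\lim_{s \nearrow 1}(1-s) I_s(L) = \haumeas{0}(\bd E \cap L)$ for almost every $L$. Since $E$ is polyconvex, for such $L$ the intersection $E \cap L$ is a finite union of closed arcs of the great circle $\sphere \cap L$ and $\bd E \cap L$ is a finite set. Writing $\left(\tfrac{\sin d}{d}\right)^{n-1} = 1 + O(d^2)$ as $d \to 0$ and splitting $I_s(L)$ at a small threshold $\epsilon$ in $d(x,y)$, the far part is bounded by a constant times $\epsilon^{-n-s}$ and its $(1-s)$-weighted contribution vanishes, while on the near part the integrand differs from $d(x,y)^{-1-s}$ by a factor $1+O(\epsilon^2)$. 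Cutting the great circle at a point outside $\bd E$ and parametrizing by arc length reduces the near part to the situation of Lemma \ref{intervalper} in its $F_\epsilon$-form, producing the limit $\haumeas{0}(\bd E \cap L)$; letting $\epsilon \to 0$ afterwards removes the $O(\epsilon^2)$ correction.

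To pass the limit inside the Grassmannian integral I would apply dominated convergence. The explicit one-dimensional antiderivative appearing in the proof of Lemma \ref{intervalper}, combined with $\sin d \le d$, yields a uniform estimate of the form $(1-s) I_s(L) \le C\bigl(1+\haumeas{0}(\bd E \cap L)\bigr)$ for $s$ in a left-neighbourhood of $1$, with $C$ independent of $L$. This dominating function is integrable on $\grass(n+1,2)$ by Theorem \ref{sphericalcrofton}. Combining the pointwise limit with dominated convergence and the Crofton formula,
\begin{equation*}
	\lim_{s \nearrow 1} (1-s) P_s(E) \;=\; c_n \integral[\grass(n+1,2)]{\haumeas{0}(\bd E \cap L)}[L] \;=\; \frac{2 c_n}{\sphsurf{n}} \haumeas{n-1}(\bd E),
\end{equation*}
and since $\sphsurf{1}=2$ the prefactor simplifies to $\sphsurf{n+1}/\sphsurf{2}$.

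The main obstacle is producing the uniform-in-$L$ dominating bound: the inequality has to be linear in $\haumeas{0}(\bd E \cap L)$ with a constant independent of $L$, so one must count the contributions of each pair of complementary arcs on $\sphere \cap L$ carefully. The elementary closed-form expression for $\int_\alpha^\beta\int_{a_i}^{b_i}|x-y|^{-1-s}$ from the proof of Lemma \ref{intervalper}, together with the crude estimate $(\cdot)^{1-s} \le (2\pi)^{1-s}$, should provide the required control, but the bookkeeping over all pairs of arcs is where the real work lies.
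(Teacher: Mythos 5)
Your proposal follows essentially the same route as the paper's proof: the spherical Blaschke--Petkantschin formula to reduce to great circles, the expansion $\sin^{n-1}\!\delta/\delta^{n+s}=(1+O(\delta^2))/\delta^{1+s}$ with a near/far splitting, Lemma \ref{intervalper} after cutting each great circle at a point of $\compl E\cap L$, interchange of limit and Grassmannian integration, and the spherical Crofton formula to produce $\frac{2c_n}{\sphsurf n}\haumeas{n-1}(\bd E)=\frac{\sphsurf{n+1}}{\sphsurf 2}\haumeas{n-1}(\bd E)$. Your explicit insistence on a dominating bound linear in $\haumeas{0}(\bd E\cap L)$ is if anything more careful than the paper, which justifies the interchange only by finiteness of the measure on $\grass(n+1,2)$; note that for a fixed polyconvex $E$ the quantity $\haumeas{0}(\bd E\cap L)$ is in fact uniformly bounded in $L$, which disposes of the bookkeeping you flag as the remaining work.
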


\begin{proof}
	If $E$ is an \(\haumeas{n}\)-nullset, then both sides of \eqref{eq:sphfrac} are equal to 0, so suppose $\haumeas{n}(E)>0$.

	We apply the spherical Blaschke-Petkantschin formula \eqref{eq:sphBP} to the left-hand side of \eqref{eq:sphfrac} which results in
	\begin{align}
		\notag (1-s) & \integral[E]{
			\integral[\compl E]{
				\frac 1 {
					d(x,y)^{n+s}
				}
			}[\haumeas{n}(y)]
		}[\haumeas{n}(x)]  \\
		\notag &= c_n (1-s) \integral[\grass(n+1,2)]{
			\integral[E \cap L]{
				\integral[\compl E \cap L]{
					\frac {\nabla_2^{n-1} (x,y)}{d(x,y)^{n+s}}
				}[\haumeas{1}(y)]
			}[\haumeas{1}(x)]
		}[L] \\
		\label{eq:grassgreatsphere} &= c_n (1-s) \integral[\grass(n+1,2)]{
			\integral[A_L]{
				\integral[[0,2\pi] \setdiff A_L]{
					\frac {\sin^{n-1} (\delta(\phi,\psi))} {\delta(\phi,\psi)^{n+s}}
				}[\psi]
			}[\phi]
		}[L],
	\end{align}
	where in the last step we introduced an arc-length parametrization $\gamma_L: [0,2\pi] \to \sphere \cap L$ of the great circle $\sphere \cap L$ such that $d(\gamma_L(\phi),\gamma_L(\psi)) = \delta(\phi,\psi)$, where
	\begin{equation*}
	  \delta(\phi,\psi) = \begin{cases}
	    |\phi-\psi|, & \text{if } |\phi-\psi| \le \pi,\\
	    2\pi - |\phi-\psi|, & \text{else},
	  \end{cases}
	\end{equation*}
	and put $A_L := \gamma_L^{-1}(E \cap L)$.
	%Observe that $A_L$ can be expressed as finite union of closed disjoint intervals, since $E$ is polyconvex.
	%and we may assume that $0, 2\pi \notin A_L$.
	We only consider the case $E \cap L \neq \sphere \cap L$ (otherwise the inner integrals equal $0$), such that $\compl E \cap L$ is the finite union of nonempty open circular arcs and choose the parametrization $\gamma_L$ such that $\gamma_L(0) = \gamma_L(2\pi)$ lies in one of the open arcs.

	Taylor expansion of the nominator in the integrand yields
	\begin{equation*}
		\frac {\sin^{n-1} (\delta(\phi,\psi))}{\delta(\phi,\psi)^{n+s}} = \frac {\delta(\phi,\psi)^{n-1} (1 + O(\delta(\phi,\psi)^2))^{n-1}} {\delta(\phi,\psi)^{n+s}} = \frac {1 + r(\delta(\phi,\psi))} {\delta(\phi,\psi)^{1+s}},%\ \phi \to \psi.
	\end{equation*}
	where for the remainder $r(t)$ there exists $\epsilon > 0$ and a constant $C > 0$ such that $|r(t)| \le Ct^2$ as long as $t < \epsilon$.
	Now divide the domain of integration into
	\begin{align*}
		M_{<\epsilon} & := \set{(\phi,\psi) \in A_L \times ([0,2\pi] \setdiff A_L)}{\delta(\phi,\psi) < \epsilon},\ \text{and} \\
		M_{\ge \epsilon} & := \set{(\phi,\psi) \in A_L \times ([0,2\pi] \setdiff A_L)}{\delta(\phi,\psi) \ge  \epsilon}.
	\end{align*}
	By our choice of parametrization, if $\epsilon$ is small enough, then all pairs of the form $(\phi,0)$ or $(\phi,2\pi), \phi \in A_L,$ do not lie in $M_{<\epsilon}$ such that $\delta(\phi,\psi) = |\phi-\psi|$ in $M_{<\epsilon}$.
	Since
	\begin{align*}
	  %\displaystyle
	  \Bigg|\iint\limits_{M_{<\varepsilon}}
		\frac {r(|\phi-\psi|)} {|\phi-\psi|^{1+s}} \, \mathrm{d}\psi \, \mathrm{d}\phi\Bigg|
		  & \le C \iint\limits_{M_{<\varepsilon}}		  
				|\phi-\psi|^{1-s}
			\, \mathrm{d}\psi \, \mathrm{d}\phi\le C \haumeas{2}(M_{<\epsilon}) \epsilon^{1-s} \le (2\pi)^2 C \epsilon^{1-s}
	\end{align*}
	by Lemma \ref{intervalper} we get
	\begin{equation*} 
		\lim_{s \nearrow 1}\ (1-s) \iint\limits_{M_{<\epsilon}}
				\frac {\sin^{n-1} |\phi-\psi|} {|\phi-\psi|^{n+s}}
			\, \mathrm{d}\psi \, \mathrm{d}\phi = \haumeas{0}(\bd A_L).
	\end{equation*}
	Since the integrand has no singularities in $M_{\ge \epsilon}$ by dominated convergence we readily have
	\begin{equation*} 
		\lim_{s \nearrow 1}\ (1-s)  \iint\limits_{M_{\geq \varepsilon}}
				\frac {\sin^{n-1} (\delta(\phi,\psi))} {\delta(\phi,\psi)^{n+s}}
			\, \mathrm{d}\psi \, \mathrm{d}\phi = 0.
	\end{equation*}
	By the finiteness of the measure on $\grass(n+1,2)$, we can exchange limit and integration over $G(n+1,2)$ in \eqref{eq:grassgreatsphere} to eventually obtain
	\begin{align*}
		\lim_{s \nearrow 1} (1-s) \integral[E]{
			\integral[\sphere \setdiff E]{
				\frac 1 {
					d(x,y)^{n+s}
				}
			}[\haumeas{n}(y)]
		}[\haumeas{n}(x)] & = c_n \integral[G(n+1,2)]{
			\haumeas{0}(\bd E \cap L)
		}[L] \\
		& = \frac{\sphsurf{n+1}} {\sphsurf 2} \haumeas{n-1}(\bd E),
	\end{align*}
	where for the last equality we applied the spherical Crofton formula, Theorem \ref{sphericalcrofton}.
\end{proof}

\section{Convergence of fractional perimeters as $s \searrow -\infty$}

In view of the result due to Maz'ya \& Shaposhnikova (\cite{mazya}, see \eqref{MSLimit}) it would be natural to consider 
\begin{align}
  \lim_{s\searrow 0}s\int_{\mathbb{S}^n}\int_{\mathbb{S}^n}\frac{|f(x)-f(y)|}{d(x,y)^{n+s}} \, \mathrm{d}\haumeas{n}(y) \, \mathrm{d}\haumeas{n}(x).\label{LimitZero}
\end{align}
However, the following considerations show that we do not get anything similar to the integral of \(f\).
%The main difference compared to \eqref{MSLimit} is that the domain of integration is compact, whereas 
%This is already observed in the case of
In the Euclidean setting we already observe for the fractional $0$-seminorm of a smooth function $f$ with compact support in an open Euclidean ball $B$ that
\begin{equation*}
  \integral[B]{
    \integral[B]{
      \frac {|f(x)-f(y)|}{|x-y|^n}
    }[y]
  } \le \max_{\xi \in B} |\nabla f(\xi)| \integral[B]{
    \integral[B]{
      \frac {1} {|x-y|^{n-1}} 
    }[y]
  } < \infty.\label{LimitToZero}
\end{equation*}
For the sphere, the finiteness of fractional seminorms for smooth functions then follows from introducing suitable coordinates and using the argument above. Hence, (\ref{LimitZero}) is always \(0\) for smooth functions.

Denote by $\tilde d(x,y) = \frac{d(x,y)}{\pi} \in [0,1]$ the normalized geodesic distance between two points $x, y \in \sphere$.
Furthermore we put $t:= -s$.

\begin{lemma}
  \label{lem:distanceint}
  Let $1 \le p < \infty$.
For every $x \in \sphere$ and $\delta > 0$

  \begin{equation*}
    \lim_{t \to \infty} t^n \integral[C(-x,\delta)]{
      \tilde d(x,y)^{-n+tp}
    }[\haumeas{n}(y)] = \frac{\haumeas{n-1}(\unitsphere n) \pi^n (n-1)!}{p^n},
  \end{equation*}
  where $C(-x,\delta) = \set{y \in \sphere}{d(-x,y) < \delta}$ is the open spherical cap around $-x$ with radius $\delta$
  (if $\delta > \pi$, then $C(-x,\delta) = \sphere)$.

\end{lemma}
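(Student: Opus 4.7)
The plan is to switch to geodesic polar coordinates centered at $-x$ and then perform a Laplace-type asymptotic analysis. Writing $\rho = d(-x,y) \in [0,\pi]$, we have $d(x,y) = \pi - \rho$ and hence $\tilde d(x,y) = 1-\rho/\pi$. Since the integrand depends only on $\rho$, the standard formula for the Riemannian volume in geodesic polar coordinates on $\sphere$ reduces the integral to a one-dimensional one:
\[
I(t) := t^n \int_{C(-x,\delta)} \tilde d(x,y)^{tp-n}\, \mathrm{d}\haumeas{n}(y) = t^n \sphsurf{n} \int_0^{R} (1-\rho/\pi)^{tp-n} \sin^{n-1}(\rho)\, \mathrm{d}\rho,
\]
where $R := \min(\delta,\pi)$ and $\sphsurf{n} = \haumeas{n-1}(\unitsphere{n})$.

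Next I would fix some $0 < \delta_0 < \min(\delta,\pi/2)$ and split the integral at $\delta_0$, separating the concentration of $(1-\rho/\pi)^{tp-n}$ near $\rho=0$ from a negligible tail. On $[\delta_0, R]$ the factor $(1-\rho/\pi)^{tp-n}$ is decreasing in $\rho$ once $tp \ge n$, hence bounded by $(1-\delta_0/\pi)^{tp-n}$. Using $\sin^{n-1}(\rho) \le 1$ and $R \le \pi$, the tail contribution is at most $\pi\, t^n (1-\delta_0/\pi)^{tp-n}$, which tends to $0$ exponentially as $t \to \infty$ because $1-\delta_0/\pi<1$.

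For the main piece on $[0,\delta_0]$ I would apply the substitution $v = tp\rho/\pi$, which transforms it into
\[
\frac{\sphsurf{n}\,\pi}{p} \int_0^{tp\delta_0/\pi} \left(1-\frac{v}{tp}\right)^{tp-n} \bigl[t\sin(\pi v/(tp))\bigr]^{n-1}\, \mathrm{d}v.
\]
For each fixed $v \ge 0$ the integrand converges pointwise, as $t \to \infty$, to $e^{-v}(\pi v/p)^{n-1}$. The elementary estimates $(1-v/(tp))^{tp} \le e^{-v}$, $(1-v/(tp))^{-n} \le (1-\delta_0/\pi)^{-n}$ on the integration range, and $\sin\theta \le \theta$ for $\theta \in [0,\pi/2]$ supply the uniform integrable majorant $(1-\delta_0/\pi)^{-n}\,e^{-v}(\pi v/p)^{n-1}$. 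Dominated convergence then yields
\[
\lim_{t \to \infty} I(t) = \frac{\sphsurf{n}\,\pi}{p} \int_0^{\infty} e^{-v}(\pi v/p)^{n-1}\, \mathrm{d}v = \frac{\sphsurf{n}\,\pi^n (n-1)!}{p^n},
\]
using $\int_0^\infty e^{-v}v^{n-1}\, \mathrm{d}v = (n-1)!$.

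The main obstacle is the careful domination in the rescaled integral: the factor $(1-\rho/\pi)^{tp-n}$ is the product of an exponentially decaying piece $(1-\rho/\pi)^{tp}$ and a divergent piece $(1-\rho/\pi)^{-n}$. Cutting off at $\delta_0 < \pi$ keeps the divergent piece uniformly bounded, and the tail estimate justifies the cutoff. The limit is independent of $\delta$, as expected from a Laplace-concentration argument: only the behaviour of the integrand in an arbitrarily small neighbourhood of the antipode $-x$ contributes.
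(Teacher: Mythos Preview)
Your argument is correct. The exact geodesic polar coordinate formula on $\sphere$ (volume element $\sin^{n-1}\rho\,\mathrm{d}\rho\,\mathrm{d}\sigma$) gives the one-dimensional integral directly, the tail cutoff at $\delta_0<\pi/2$ is handled as you say, and the substitution $v=tp\rho/\pi$ followed by dominated convergence with the majorant $(1-\delta_0/\pi)^{-n}e^{-v}(\pi v/p)^{n-1}$ is fully justified. The only cosmetic slip is the missing factor $\sphsurf{n}$ in your tail bound $\pi t^n(1-\delta_0/\pi)^{tp-n}$, which of course does not affect the vanishing.

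The paper's proof follows a different route. Rather than invoking the exact spherical volume element, it works in normal coordinates around $-x$ and sandwiches $\sqrt{\det g_{ij}}$ between $1-\epsilon$ and $1+\epsilon$; this produces matching upper and lower bounds and requires a final $\epsilon\to 0$ step. The resulting Euclidean integral is then recognised as an incomplete Beta function $B_{\delta_0/\pi}(n,-n+tp+1)$, shown to agree asymptotically with the complete Beta function, and the limit $t^nB(n,-n+tp+1)\to (n-1)!/p^n$ is obtained via Stirling's formula. Your approach is more elementary and more direct: by exploiting that the sphere has an explicit polar volume element you avoid both the $\epsilon$-sandwich and the Beta/Stirling machinery, replacing them with a textbook Laplace-method computation. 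The paper's normal-coordinate argument, on the other hand, is the one that would survive on a general Riemannian manifold where no closed-form volume element is available.
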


\begin{proof}
  For $\epsilon > 0$ there exist $\delta_0 \in (0, \delta)$ and a normal coordinate chart $\phi: C(-x,\delta_0) \to B_{\delta_0}^{n}$ such that
  \begin{align*}
	  & \tilde d(x,y) = 1-\frac{|\phi(y)|}{\pi},\quad \text{ and} \\
	  & 1-\epsilon \le \sqrt{\det(g_{ij}(y))} \le 1 + \epsilon
  \end{align*}
  for all $y \in C(-x,\delta_0)$.
  Observe that
  \begin{equation*}
    t^n \integral[C(-x,\delta) \setdiff C(-x,\delta_0)]{
      \tilde d(x,y)^{-n+tp}
    }[\haumeas{n}(y)] \le \haumeas{n}(\sphere) t^n \left( 1- \frac{\delta_0}{\pi} \right)^{-n+tp} \to 0
  \end{equation*}
  as $t \to \infty$, thus
  \begin{equation*}
    \lim_{t \to \infty} t^n \integral[C(-x,\delta)]{
      \tilde d(x,y)^{-n+tp}
    }[\haumeas{n}(y)] = 
    \lim_{t \to \infty} t^n \integral[C(-x,\delta_0)]{
      \tilde d(x,y)^{-n+tp}
    }[\haumeas{n}(y)].
  \end{equation*}
  By our choice of coordinates we have
  \begin{align*}
    t^n \integral[C(-x,\delta_0)]{
      \tilde d(x,y)^{-n+tp}
    }[\haumeas{n}(y)] &\le (1+\epsilon) t^n \integral[B_{\delta_0}^{n}]{
	    \left(1 - \frac{|\phi(y)|}{\pi} \right)^{-n+tp}
    }[y] \\
    &= (1+\epsilon)\haumeas{n-1}(\unitsphere n) t^n \integral[0][\delta_0]{
      \left(1 - \frac{r}{\pi} \right)^{-n+tp} r^{n-1}
    }[r] \\
    &= (1+\epsilon)\haumeas{n-1}(\unitsphere n) \pi^n t^n \integral[0][\delta_0 / \pi]{
      (1-u)^{-n+tp} u^{n-1}
    }[u]
  \end{align*}
  where we introduced the substitution $u = \frac{r}{\pi}$ in the last step.
  The last integral is equal to $B_{\frac{\delta_0}{\pi}}(n,-n+tp+1)$, where
  $B_T(a,b) = \integral[0][T]{u^{a-1}(1-u)^{b-1}}[u]$ is the incomplete Beta function.
  Note that
  \begin{equation*}
	  \lim_{t \to \infty} t^n B_{\frac {\delta_0} {\pi}}(n,-n+tp+1) = \lim_{t \to \infty} t^n B(n,-n+tp+1)
  \end{equation*}
  where $B(a,b) = \integral[0][1]{u^{a-1}(1-u)^{b-1}}[u]$ is the (complete) Beta function
  since
  \begin{align*}
	  t^n \integral[\frac{\delta_0}{\pi}][1]{u^{n-1}(1-u)^{-n+tp}}[u] \le t^n \left( 1-\frac{\delta_0}{\pi} \right)^{-n+tp+1} \to 0,
  \end{align*}
  as $t \to \infty$.
  Thus it suffices to determine the value of $\displaystyle \lim_{t \to \infty} t^n B(n,-n+tp+1)$.
  The identity $B(a,b) = \frac{\Gamma(a)\Gamma(b)}{\Gamma(a+b)}$ together with Stirling's formula
  \begin{equation*}
    \Gamma(x) = \sqrt{\frac{2\pi}{x}} \left(\frac x e \right)^x e^{\mu(x)},\quad \text{where } 0 < \mu(x) < \frac{1}{12x}
  \end{equation*}
  can be used to deduce
  \begin{align*}
    \lim_{t \to \infty} t^n B(n,-n+tp+1) &= \Gamma(n) \lim_{t \to \infty} \sqrt{\frac{tp+1}{tp+1-n}} e^n \left(1 - \frac{n}{tp+1} \right)^{tp+1} \frac{t^n}{(tp+1-n)^n} \frac{e^{\mu(-n+tp+1)}}{e^{\mu(tp+1)}}\\
    &= \frac{(n-1)!}{p^n}.
  \end{align*}
  Similarly, one can prove that the reverse inequality
  \begin{equation*}
    \lim_{t \to \infty} t^n \integral[C(-x,\delta_0)]{\tilde d(x,y)^{-n+tp}}[\haumeas{n}(y)] \ge (1-\epsilon) \frac{\haumeas{n-1}(\unitsphere n) \pi^n (n-1)!}{p^n}
  \end{equation*}
  holds.
  Thus the result follows from letting $\epsilon \to 0$.
\end{proof}
\newline

The next theorem shows the convergence of fractional seminorms as $s \to -\infty$.
Note that the limit measures the reflection symmetry of a function in the $L_p$-sense, i.e. the integral in the right-hand side of \eqref{eq:seminorminf} is $0$ precisely for functions which are even almost everywhere and it is equal to $(2 \norm[p]{f})^p$ precisely for functions which are odd almost everywhere.

\begin{theorem}
  \label{th:convsto-inf}
  Let $1 \le p < \infty$ and $f \in L_p(\sphere)$.
  Then
  \begin{equation}
    \label{eq:seminorminf}
    \lim_{t \to \infty} t^n \integral[\sphere]{
      \integral[\sphere]{
	\frac{|f(x)-f(y)|^p}{\tilde d(x,y)^{n-tp}}
      }[\haumeas{n}(y)]
    }[\haumeas{n}(x)] = c_{n,p} \integral[\sphere]{
    |f(x)-f(-x)|^p}[\haumeas{n}(x)]
  \end{equation}
  where $c_{n,p} = \frac{\haumeas{n-1}(\unitsphere n) \pi^n (n-1)!}{p^n}$.
\end{theorem}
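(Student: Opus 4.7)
The plan is to first establish the limit for continuous $f$ by a direct application of Lemma \ref{lem:distanceint}, and then to extend to arbitrary $f \in L_p(\sphere)$ by density. Writing $t = -s$ and
\[
  I_t(f)^p := t^n \integral[\sphere]{\integral[\sphere]{|f(x)-f(y)|^p \tilde d(x,y)^{tp-n}}[\haumeas{n}(y)]}[\haumeas{n}(x)],
\]
Minkowski's inequality for the measure $t^n \tilde d(x,y)^{tp-n}\,\mathrm{d}\haumeas{n}(x)\mathrm{d}\haumeas{n}(y)$ on $\sphere\times\sphere$ shows that $f\mapsto I_t(f)$ is a seminorm on $L_p(\sphere)$. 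Using $|f(x)-f(y)|^p \le 2^{p-1}(|f(x)|^p + |f(y)|^p)$, Fubini and the rotational invariance of $\sphere$, one obtains the key \emph{uniform} bound
\[
  I_t(f)^p \le 2^p M_t \lebnorm{f}{p}{\sphere}^p, \qquad M_t := t^n \integral[\sphere]{\tilde d(x,y)^{tp-n}}[\haumeas{n}(y)],
\]
where $M_t$ does not depend on $x\in\sphere$ and $M_t \to c_{n,p}$ as $t\to\infty$ by Lemma \ref{lem:distanceint} applied with $\delta=2\pi$ (so that $C(-x,\delta)=\sphere$ up to a nullset).

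Step 1: continuous $f$. Fix $\varepsilon>0$ and, by uniform continuity of $f$, choose $\delta\in(0,\pi)$ so small that $|f(y)-f(-x)|<\varepsilon$ whenever $y\in C(-x,\delta)$ and $x\in\sphere$. Splitting the inner integral at $C(-x,\delta)$, the portion over $\sphere\setdiff C(-x,\delta)$ is bounded by a constant depending on $\lebnorm{f}{\infty}{\sphere}$ and $\haumeas{n}(\sphere)$ times $t^n(1-\delta/\pi)^{tp-n}$, which vanishes as $t\to\infty$. On $C(-x,\delta)$ the pointwise estimate $\bigl||f(x)-f(y)|^p - |f(x)-f(-x)|^p\bigr|\le C(\lebnorm{f}{\infty}{\sphere})\,\varepsilon$ reduces matters to the main term
\[
  |f(x)-f(-x)|^p\cdot t^n\integral[C(-x,\delta)]{\tilde d(x,y)^{tp-n}}[\haumeas{n}(y)],
\]
which by Lemma \ref{lem:distanceint} tends pointwise in $x$ to $c_{n,p}|f(x)-f(-x)|^p$. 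Dominated convergence in the outer integral (justified by the uniform bound above) followed by $\varepsilon\to 0$ yields \eqref{eq:seminorminf} for continuous $f$.

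Step 2: density argument. Given $f\in L_p(\sphere)$ and $\eta>0$, choose continuous $g$ with $\lebnorm{f-g}{p}{\sphere}<\eta$. Since $I_t$ is a seminorm,
\[
  |I_t(f) - I_t(g)| \le I_t(f-g) \le 2M_t^{1/p}\lebnorm{f-g}{p}{\sphere},
\]
and since the antipodal map $x\mapsto -x$ is an isometry of $L_p(\sphere)$,
\[
  \bigl| \lebnorm{f(\cdot)-f(-\cdot)}{p}{\sphere} - \lebnorm{g(\cdot)-g(-\cdot)}{p}{\sphere} \bigr| \le 2\lebnorm{f-g}{p}{\sphere}.
\]
Combining these with the boundedness of $M_t$ for large $t$ and Step 1 applied to $g$, letting first $t\to\infty$ and then $\eta\to 0$ yields the theorem. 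The main obstacle is the interplay between the two limits, namely the concentration of the kernel at the antipodal diagonal $y=-x$ against the $L_p$-approximation error of $f$ by $g$; this is overcome precisely by the uniform estimate $I_t(f)\le 2M_t^{1/p}\lebnorm{f}{p}{\sphere}$ derived in the first paragraph.
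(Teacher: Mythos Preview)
Your proof is correct and follows essentially the same two-step strategy as the paper: first establish \eqref{eq:seminorminf} for continuous functions via Lemma \ref{lem:distanceint}, the concentration of the kernel at $y=-x$, and uniform continuity; then pass to general $f\in L_p(\sphere)$ by density using the uniform seminorm bound $I_t(h)\le 2M_t^{1/p}\lebnorm{h}{p}{\sphere}$. The paper organizes Step 2 slightly differently, phrasing it as $L_p$-convergence of the inner-integral function $x\mapsto F_t(x)$ toward $c_{n,p}^{1/p}|f(x)-f(-x)|$, but the key estimates (the uniform kernel bound and the antipodal isometry) are the same as yours; your formulation in terms of the seminorm $I_t$ is if anything a little more direct, since it only requires the norm convergence established in Step 1.
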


\begin{proof}
  We split the proof into two steps.
  In the first step we show \eqref{eq:seminorminf} for continuous functions and use a density argument in the second step to extend the formula to general $L_p$-functions.\\
  \newline
  {\it Step 1: Proof for continuous functions}

  Let $g$ be a continuous function on $\sphere$. 
  First, note that by the dominated convergence theorem and Lemma \ref{lem:distanceint}
  \begin{equation}
    \label{eq:diffofantipoles}
    \lim_{t \to \infty} t^n \integral[\sphere]{
      \left( \integral[\sphere]{
	\frac{|g(x)-g(-x)|^p}{\tilde d(x,y)^{n-tp}}
      }[\haumeas{n}(y)] \right)
    }[\haumeas{n}(x)] = c_{n,p} \integral[\sphere]{|g(x)-g(-x)|^p}[\haumeas{n}(x)]
  \end{equation}
  since the integrand for $x$-integration is dominated by $(2\norm[\infty]{g})^p C$ with a constant $C > 0$ for sufficiently large $t$.
  Moreover, we have for every $0 < \delta < \pi$ that
  \begin{align}
    \notag
    \lim_{t \to \infty} t^n \integral[\sphere]{ &
      \integral[\sphere \setdiff C(-x,\delta)]{ 
	\frac{|g(x)-g(-x)|^p}{\tilde d(x,y)^{n-tp}}
      }[\haumeas{n}(y)]
    }[\haumeas{n}(x)]  \\
    \label{eq:concentration-x}
    & = \lim_{t \to \infty} t^n \integral[\sphere]{
      \integral[\sphere \setdiff C(-x,\delta)]{
	\frac{|g(x)-g(y)|^p}{\tilde d(x,y)^{n-tp}}
      }[\haumeas{n}(y)]
    }[\haumeas{n}(x)] = 0,
  \end{align}
  i.e. the inner integrals concentrate on the point $-x$ in the limit.
  
  Now let $\epsilon > 0$ and choose $\delta > 0$ be such that $|g(y)-g(-x)|< \epsilon$ whenever $x \in \sphere$ and $y \in C(-x,\delta)$.
  Using Taylor's formula for the case $p > 1$ and the reverse triangle inequality for the case $p=1$ we rewrite
  \begin{equation*}
	  |g(x)-g(y)|^p = |g(x)-g(-x)|^p + r(x,y)
  \end{equation*}
  where 
 % for $p > 1$ by Taylor's formula and for $p=1$ by the inverse triangle inequality
  the remainder term satisfies $|r(x,y)| \le c \cdot \epsilon$ for all $y \in C(-x,\delta)$ with a constant $c$ independent of $x$ and $y$.
  %By Taylor's formula for $p>1$ and by the reverse triangle inequality for $p=1$.
  %If $p > 1$, then using Taylor's formula we deduce
  %\begin{align*}
    %|g(x)-g(y)|^p \le |g(x)-g(-x)|^p + p (2\norm[\infty]{g})^{p-1} \epsilon \qquad \forall y \in B_\delta(-x)
  %\end{align*}
  %whereas for $p=1$ applying the triangle inequality leads to
  %\begin{equation*}
    %|g(x)-g(y)| \le |g(x)-g(-x)| + \epsilon \qquad \forall y \in B_\delta(-x).
  %\end{equation*}
  From this and \eqref{eq:concentration-x} it follows that
  \begin{align*}
    \limsup_{t \to \infty} 
    \left|\, t^n \integral[\sphere]{
      \integral[\sphere]{ \right. & \left.
	\frac{|g(x)-g(y)|^p}{\tilde d(x,y)^{n-tp}}
      }[\haumeas{n}(y)]
    }[\haumeas{n}(x)] - t^n \integral[\sphere]{
      \integral[\sphere]{
	\frac{|g(x)-g(-x)|^p}{\tilde d(x,y)^{n-tp}}
      }[\haumeas{n}(y)]
    }[\haumeas{n}(x)] \right| \\
    &\le c \cdot \epsilon \limsup_{t \to \infty} t^n \integral[\sphere]{ %\integral[\sphere]{
      \integral[C(-x,\delta)]{
	\tilde d(x,y)^{-n+tp}
      }[\haumeas{n}(y)]
    }[\haumeas{n}(x)].
  \end{align*}
  Now formula \eqref{eq:seminorminf} for continuous functions follows from the arbitrariness of $\epsilon > 0$ and \eqref{eq:diffofantipoles}.\\
  \newline
  {\it Step 2: Proof for general $f \in L_p$}

  \newcommand{\symm}{\mathcal S}

  Let $f \in L_p(\sphere)$ and define $\symm f: \sphere \to \Real$ by $\symm f(x):= c_{n,p}^{\frac 1 p}|f(x)-f(-x)|$, and for $t \in \Real$ the function $F_t: \sphere \to \Real$ by
  \begin{equation*}
    F_t(x) := \left( t^n \integral[\sphere]{
      \frac{|f(x)-f(y)|^p}{\tilde d(x,y)^{n -tp}}
    }[\haumeas{n}(y)] \right)^{\frac 1 p}.
  \end{equation*}
  We show that $F_t \stackrel{t \to \infty}{\to} \symm f$ in $L_p(\sphere)$ which implies that $\displaystyle \lim_{t \to \infty} \norm[p]{F_t}^p = \norm[p]{\symm f}^p$ and thus formula \eqref{eq:seminorminf}.
  By density, for each $\epsilon > 0$ there exists a continuous function $g$ on $\sphere$ such that $\norm[p]{f-g} < \epsilon$.
  With $\symm g$ and $G_t$ defined as above, we have
  \begin{align*}
    \norm[p]{F_t - \symm f} \le \norm[p]{F_t-G_t} + \norm[p]{G_t - \symm g} + \norm[p]{\symm g-\symm f}.
  \end{align*}
  By step 1, the summand $\norm[p]{G_t - \symm g}$ tends to 0 as $t$ goes to infinity.
  Moreover, by rotation invariance of the Hausdorff measure,
  \begin{align*}
    \norm[p]{\symm g-\symm f} = c_{n,p}^{\frac 1 p} \left( \integral[\sphere]{
      \bigg| |g(x)-g(-x)| - |f(x)-f(-x)| \bigg|^p
    }[\haumeas{n}(x)] \right)^{\frac 1 p }
    \le 2 c_{n,p}^{\frac 1 p} \norm[p]{f-g} < 2 c_{n,p}^{\frac 1 p} \cdot \epsilon.
  \end{align*}
  For the remaining summand, we first observe that
  \begin{equation*}
    F_t(x) = t^{\frac n p } \left\| \frac{f(x)-f(\cdot)}{\tilde d(x,\cdot)^{\frac n p-t}} \right\|_p
  \end{equation*}
  and by the triangle inequality for $L_p$-norms
  \begin{align*}
    \norm[p]{F_t-G_t} &= \left( \integral[\sphere]{
      |F_t(x)-G_t(x)|^p
    }[\haumeas{n}(x)] \right)^{\frac 1 p} \le t^{\frac n p} \left( \integral[\sphere]{
    \left|  
    \left\| \frac{f(x)-g(x)}{\tilde d(x,\cdot)^{\frac n p - t}} \right\|_p +
    \left\| \frac{f(\cdot)-g(\cdot)}{\tilde d(x,\cdot)^{\frac n p - t}} \right\|_p
    \right|^p
  }[\haumeas{n}(x)] \right)^{\frac 1 p} \\
  & \le 2 \left( t^n \integral[\sphere]{
    \integral[\sphere]{
      \frac{|f(x)-g(x)|^p}{\tilde d(x,y)^{n-tp}}
    }[\haumeas{n}(y)]
  }[\haumeas{n}(x)] \right)^{\frac 1 p} = 2 \left(\integral[\sphere]{ \frac{t^n}{\tilde d(e_1,y)^{n-tp}} }[\haumeas{n}(y)] \right)^{\frac 1 p} \norm[p]{f-g} \\
  & < \text{const} \cdot \epsilon,
\end{align*}
where $e_1 = (1,0,\dots,0)$ and the constant does not depend on $t$.
\end{proof}

\begin{corollary}
  Let $E \subseteq \sphere$ be a Borel set.
  Then,
  \begin{equation*}
    \lim_{t \to \infty} t^n \integral[E]{
      \integral[\compl E]{
	\frac{1} {\tilde d(x,y)^{n-t}}
      }[\haumeas{n}(y)]
    }[\haumeas{n}(x)] = c_{n,1} \haumeas{n}( (-E) \cap (\compl E)),
  \end{equation*}
  where $c_{n,1} = \haumeas{n-1}(\unitsphere n) \pi^n (n-1)!$.
\end{corollary}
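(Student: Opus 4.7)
The plan is to derive the corollary directly from Theorem \ref{th:convsto-inf} by specializing to indicator functions, with $p = 1$ and $f = \indicator_E$. Since $\sphere$ is compact, $\indicator_E \in L_1(\sphere)$, so the hypothesis is satisfied and the theorem applies.

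First I would unpack the left-hand side of \eqref{eq:seminorminf} for this choice. Since $|\indicator_E(x) - \indicator_E(y)|$ equals $1$ exactly on $(E \times \compl E) \cup (\compl E \times E)$ and vanishes elsewhere, and since the kernel $\tilde d(x,y)^{-(n-t)}$ is symmetric in $x$ and $y$, the double integral over $\sphere \times \sphere$ reduces to
\begin{equation*}
	2 \integral[E]{\integral[\compl E]{\tilde d(x,y)^{-(n-t)}}[\haumeas{n}(y)]}[\haumeas{n}(x)].
\end{equation*}

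Next I would unpack the right-hand side. The integrand $|\indicator_E(x) - \indicator_E(-x)|$ equals $1$ precisely on the symmetric difference $(E \cap \compl{(-E)}) \cup ((-E) \cap \compl E)$. Since $\haumeas{n}$ is invariant under the antipodal map $x \mapsto -x$, which carries the first of these sets bijectively onto the second, the two disjoint pieces carry equal mass, giving
\begin{equation*}
	\integral[\sphere]{|\indicator_E(x) - \indicator_E(-x)|}[\haumeas{n}(x)] = 2\, \haumeas{n}((-E) \cap \compl E).
\end{equation*}

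Combining the two calculations, Theorem \ref{th:convsto-inf} produces an identity in which a common factor of $2$ appears on both sides; cancelling it yields the corollary with constant $c_{n,1} = \haumeas{n-1}(\unitsphere n) \pi^n (n-1)!$. I do not anticipate any genuine obstacle, since the theorem already carries all of the analytic content. The only bookkeeping subtlety is the antipodal-invariance step used to rewrite the symmetric-difference integral in the form $2\, \haumeas{n}((-E) \cap \compl E)$, and this is precisely what ensures the two factors of $2$ match and cancel.
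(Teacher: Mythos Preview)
Your proposal is correct and follows exactly the paper's approach: the paper's own proof is the single line ``The statement follows from Theorem \ref{th:convsto-inf} with $f = \indicator_E$ and $p=1$.'' Your write-up simply unpacks this application in more detail, verifying that the factors of $2$ on both sides cancel, which is entirely sound.
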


\begin{proof}
  The statement follows from Theorem \ref{th:convsto-inf} with $f = \indicator_E$ and $p=1$.
\end{proof}

\subsection*{Acknowledgement}
The authors would like to thank Monika Ludwig for helpful comments and suggestions during the preparation of this paper.

\bibliographystyle{plain}
\bibliography{spherefrac_paper}

\begin{minipage}[t]{.46\textwidth}
	Andreas Kreuml\\
	{\small
	Institut f\"ur Diskrete Mathematik und Geometrie\\
	Technische Universit\"at Wien\\
	Wiedner Hauptstra{\ss}e 8-10/1046\\
	1040 Vienna, Austria\\
E-mail: andreas.kreuml@tuwien.ac.at}
\end{minipage}
\hfill~\hfill
\begin{minipage}[t]{.46\textwidth}
	Olaf Mordhorst\\
	{\small
	Institut f\"ur Mathematik\\
	Goethe-Universit\"at Frankfurt am Main\\
	Robert-Mayer-Stra{\ss}e 10\\
	60325 Frankfurt am Main, Germany\\
E-mail: mordhorst@math.uni-frankfurt.de}
\end{minipage}

\end{document}